\documentclass[onefignum,onetabnum]{siamart251216}

%% ------------------------------------------------------------------
%% Code used in examples, needed to reproduce 
%% ------------------------------------------------------------------
%% Used for \set, used in an example below
\usepackage{braket,amsfonts}
\usepackage{amssymb} 
\usepackage{amsmath}
\usepackage[numbers]{natbib}
%% Used in table example below
\usepackage{array}

\usepackage{soul}

%% Used in table and figure examples below
\usepackage[caption=false]{subfig}
%% Used for papers with subtables created with the subfig package
\captionsetup[subtable]{position=bottom}
\captionsetup[table]{position=bottom}

%% Used for PgfPlots example, shown in the "Figures" section below.
\usepackage{pgfplots}

%% Used for creating new theorem and remark environments
\newsiamthm{claim}{Claim}
\newsiamremark{remark}{Remark}
\newsiamremark{hypothesis}{Hypothesis}
\crefname{hypothesis}{Hypothesis}{Hypotheses}

%% Algorithm style, could alternatively use algpseudocode
\usepackage{algorithmic}
\usepackage{algorithm}
%% For figures
\usepackage{graphicx,epstopdf}

%% For referencing line numbers
\Crefname{ALC@unique}{Line}{Lines}

%% For creating math operators
\usepackage{amsopn}

\usepackage[normalem]{ulem}

%% ------------------------------------------------------------------
%% Macros for in-document examples. These are not meant to reused for
%% SIAM journal papers.
%% ------------------------------------------------------------------
\usepackage{xspace}
\usepackage{bold-extra}
\usepackage[most]{tcolorbox}

\colorlet{texcscolor}{blue!50!black}
\colorlet{texemcolor}{red!70!black}
\colorlet{texpreamble}{red!70!black}
\colorlet{codebackground}{black!25!white!25}

\newsiamthm{assumption}{Assumption} %<-Preamble
 % print backslash in typewriter OT1/T1

\lstdefinestyle{siamlatex}{%
  style=tcblatex,
  texcsstyle=*\color{texcscolor},
  texcsstyle=[2]\color{texemcolor},
  keywordstyle=[2]\color{texemcolor},
  moretexcs={cref,Cref,maketitle,mathcal,text,headers,email,url},
}

\tcbset{%
  colframe=black!75!white!75,
  coltitle=white,
  colback=codebackground, % bottom/left side
  colbacklower=white, % top/right side
  fonttitle=\bfseries,
  arc=0pt,outer arc=0pt,
  top=1pt,bottom=1pt,left=1mm,right=1mm,middle=1mm,boxsep=1mm,
  leftrule=0.3mm,rightrule=0.3mm,toprule=0.3mm,bottomrule=0.3mm,
  listing options={style=siamlatex}
}

\newtcblisting[use counter=example]{example}[2][]{%
  title={Example~\thetcbcounter: #2},#1}

\newtcbinputlisting[use counter=example]{\examplefile}[3][]{%
  title={Example~\thetcbcounter: #2},listing file={#3},#1}

\DeclareTotalTCBox{\code}{ v O{} }
{ %fontupper=\ttfamily\color{texemcolor},
  fontupper=\ttfamily\color{black},
  nobeforeafter,
  tcbox raise base,
  colback=codebackground,colframe=white,
  top=0pt,bottom=0pt,left=0mm,right=0mm,
  leftrule=0pt,rightrule=0pt,toprule=0mm,bottomrule=0mm,
  boxsep=0.5mm,
  #2}{#1}

% Stretch the pages
\patchcmd\newpage{\vfil}{}{}{}
\flushbottom

%commands for editing -- CAN BE REMOVED AFTERWARD

%% ------------------------------------------------------------------
%% End of macros for in-document examples. 
%% ------------------------------------------------------------------

%% ------------------------------------------------------------------
%% HEADING INFORMATION
%% ------------------------------------------------------------------
\begin{tcbverbatimwrite}{tmp_\jobname_header.tex}
\title{Residuals-Based Contextual Distributionally Robust Optimization with Decision-Dependent Uncertainty: Theoretical Guarantees and Decomposition Algorithm}
    %\funding{This work has been partially supported by the U.S.\ Department of Energy, Office of Science, Office of Advanced Scientific Computing Research (ASCR) under Grant DE-SC0023361 of the third author.}

\author{Qing Zhu\thanks{Department of Integrated Systems Engineering, The Ohio State University, Columbus, OH, USA, (\email{zhu.2166@osu.edu}).}
\and Xian Yu\thanks{Corresponding author; Department of Integrated Systems Engineering, The Ohio State University, Columbus, OH, USA,  (\email{yu.3610@osu.edu}).}
\and G\"uz\.{I}n Bayraksan\thanks{Department of Integrated Systems Engineering, The Ohio State University, Columbus, OH, USA, (\email{bayraksan.1@osu.edu}). This author is partially supported by the U.S.\ Department of Energy, Office of Science, Office of Advanced Scientific Computing Research (ASCR) under Grant DE-SC0023361.}}

% Custom SIAM macro to insert headers
\headers{Residuals-based Contextual Decision-Dependent DRO}{Qing Zhu, Xian Yu, and G\"uz\.{I}n Bayraksan}
\end{tcbverbatimwrite}
\input{tmp_\jobname_header.tex}

% Optional: Set up PDF title and authors
% \ifpdf
% \hypersetup{ pdftitle={Guide to Using  SIAM'S \LaTeX\ Style} }
% \fi

%% ------------------------------------------------------------------
%% END HEADING INFORMATION
%% ------------------------------------------------------------------

%% ------------------------------------------------------------------
%% MAIN Document
%% ------------------------------------------------------------------

\begin{document}
\maketitle

%% ------------------------------------------------------------------
%% ABSTRACT
%% ------------------------------------------------------------------
\begin{tcbverbatimwrite}{tmp_\jobname_abstract.tex}
\begin{abstract}
We consider a residuals-based distributionally robust optimization (DRO) model, where the underlying uncertainty depends on both covariate information and our decisions. 
We adopt both parametric and nonparametric regression models to learn the latent decision dependency and construct a nominal distribution (thereby ambiguity sets) around the learned model using empirical residuals from the regressions. 
We formulate the ambiguity set via the Wasserstein distance, where the nominal distribution is both decision- and covariate-dependent.  
We provide conditions under which desired statistical properties such as asymptotic optimality, rate of convergence, and finite sample guarantees are satisfied. To solve the resulting DRO model, we develop a specialized Bender's decomposition algorithm with nonlinear cuts and prove its finite convergence. Through numerical experiments, we illustrate the effectiveness of our approach and the benefits of integrating decision dependency into a residuals-based DRO framework.
\end{abstract}

\begin{keywords}
Contextual stochastic programming, decision-dependent uncertainty, distributionally robust optimization, Wasserstein distance, covariates, machine learning
\end{keywords}

\begin{MSCcodes}
90C15, 90C47, 65K05
\end{MSCcodes}

% \funding{This work has been partially supported by the U.S.\ Department of Energy, Office of Science, Office of Advanced Scientific Computing Research (ASCR) under Grant DE-SC0023361 of the third author.}
\end{tcbverbatimwrite}
\input{tmp_\jobname_abstract.tex}
%% ------------------------------------------------------------------
%% END HEADER
%% ------------------------------------------------------------------

% Preamble consisting of:
% Packages and macro definitions go here.
% Define title, authors, headers here.
% Other front matter goes here: abstract, keywords, MSC codes.
% Main body goes here.
% Appendices goes here (optional).
% Acknowledgements go here (optional).
% Bibliography goes here.
\section{Introduction}

Many real-world optimization problems under uncertainty face the following two key complicating factors: (i) the underlying uncertainty is often affected by contextual/covariate information, and (ii) the decisions to be optimized can also have a significant impact on the uncertainty.
For example, in facility location problems, the decision maker needs to decide where to open new stores to sell a product in order to maximize total revenue under uncertain customer demand. Customer demand could be affected by contextual information (e.g., seasonality, market indicators) as well as facility location decisions.
Opening a facility in an area could increase the demand in that area, leading to the so-called \textit{endogenous} or \textit{decision-dependent} uncertainty. 
Another example in the realm of power systems is that accurately predicting electricity demand entails utilizing contextual factors (e.g., seasonal variations), while decisions regarding electricity generation, expansion, and distribution can themselves influence the electricity demand.  Therefore, in many real-world problems, it is essential to capture the impact of both contextual information and decision dependency on the underlying uncertainty.

To leverage covariate information in decision-making problems under uncertainty, the following contextual (or conditional) stochastic program (CSP) has been proposed \cite{ban2019big,bertsimas2020predictive,sen2018learning}:
\begin{equation}
    \label{eq:csp}
    %\mbox{{\bf (CSP)}}
     \min_{z \in \mathcal{Z}} \mathbb{E}_Y[c(z,Y)|X=x],
     \tag{\bf{CSP}} \notag
\end{equation}
where $z$ denotes the decision variables with feasible set $\mathcal{Z} \subseteq \mathbb{R}^{d_z}$, $X \in \mathbb{R}^{d_x}$ denotes the random vector of covariates with $x$ being its realization, and the random vector $Y \in \mathbb{R}^{d_y}$ denotes the uncertain parameters in the model. In the above setup, the decision maker typically has access to joint observations of $(X,Y)$, and the covariate $x$ is observed before the optimization model is solved. 
In recent years, there has been a stream of research that focuses on solving the above \eqref{eq:csp}; see, e.g., the recent survey \cite{sadana2024survey}. We will briefly review these works in Section \ref{sec: related work}. Among them, \cite{ban2019dynamic} and \cite{sen2018learning} proposed adding residuals from the prediction model 
to account for estimation error in the prediction step. 
\cite{kannan2025data} formalized this approach and investigated the theoretical properties of the so-called empirical residuals-based sample average approximation (ER-SAA), and  \cite{kannan2020residuals}  proposed distributionally robust optimization (DRO) variants (denoted ER-DRO). However,  none of these works considered decision dependency on the uncertainty, which is critical in many real-world applications. 
This work extends the residuals-based approach to CSP by modeling decision-dependent uncertainty.

Motivated by this gap and the decision-dependent uncertainty present in many real-world applications, 
we focus on the following {\it decision-dependent CSP}, where the uncertain parameter $Y$ depends on both covariate $x$ and our decisions $z$: 
\begin{equation}\label{eq:ddcsp}
   %\mbox{{\bf (DD-CSP)}}\quad  
   \min_{z \in \mathcal{Z}} \mathbb{E}_Y [c(z, Y)|X = x, Z =z].
   \tag{\bf{DD-CSP}}
\end{equation}
Above, decisions $z$ can be regarded as another covariate that can impact the uncertainty $Y$, similar to $x$. For notational simplicity, we omit $x$ and $z$ from the notation for $Y$  here. We will write $Y$ as $Y(x,z)$ explicitly to facilitate our analysis later.
Note that the above setup, without loss of generality, considers a portion of decisions $z$ that affect uncertainty and others that may not. 

Because the conditional distribution of $Y$ given the covariates $x$ and decisions $z$ is typically unknown (but assumed to exist) and the resulting conditional expectation (assumed to be well defined and finite) often cannot be calculated, \eqref{eq:ddcsp} cannot be solved exactly. In practice, given joint random observations of $(X, Z, Y)$, drawn from an unknown ground truth distribution, 
the problem \eqref{eq:ddcsp} is instead approximated. 
This approximation is often challenging due to data limitations. For instance, even in large datasets, conditioning on covariates $x$ and decisions $z$ results in only a few effective observations, leading to significant ambiguities in the underlying conditional distribution.

Ambiguities in estimating the conditional distribution in the decision-dependent setting motivate us to use a DRO framework because DRO can have better out-of-sample performance by reducing the so-called ``optimizer's curse'' or ``negative bias'' of stochastic optimization. 
Therefore, to approximate the above \eqref{eq:ddcsp}, we propose an empirical residuals-based decision-dependent DRO (ER-$\rm{D^3RO}$) approach. We first apply a regression model to estimate the dependency of random parameter $Y$ on the covariate $x$ and decision $z$. The residuals obtained during the training step of the regression model are then added to this point prediction to construct an empirical distribution. Centered at this empirical distribution, we construct 
Wasserstein distance based 
ambiguity sets to find optimal decisions against the worst-case scenario within the distributional ambiguity.
Before summarizing our contributions, let us review related research.

\subsection{Related Work}\label{sec: related work}
This work contributes to three lines of research: (i) DRO, (ii) contextual stochastic optimization, and (iii) decision-dependent stochastic optimization. For DRO, we refer the readers to the extensive surveys \cite{rahimian2019distributionally,kuhn2025distributionally}.  We instead focus on contextual and decision-dependent stochastic optimization and look into the intersections between these three research areas.  

We begin with contextual stochastic optimization.
While traditional methods consider purely statistical error in the prediction step without considering the downstream optimization, there has been a series of works that integrated optimization and prediction. We refer interested readers to~\cite{qi2022integrating, sadana2024survey} for reviews on integrating prediction and optimization under a contextual setting.
In one such integrated approach, \cite{elmachtoub2022smart} proposed a new smart ``predict, then optimize'' (SPO) framework with an SPO loss function that measures the decision error induced by a prediction and demonstrated its consistency and asymptotic validity; see, e.g., \cite{el2019generalization,estes2023smart,kallus2023stochastic} for extensions of this framework. 
Another approach, ``estimate-then-optimize'' (ETO),  incorporates learning of the conditional distributions into the optimization step.  
For example, \cite{bertsimas2020predictive} investigated nonparametric regression models by assigning weights to each data point that depend on the covariates, creating a reweighted SAA, and \cite{bertsimas2019dynamic} extended this approach to multistage problems. 
The residuals-based approach, in contrast, creates an SAA approximation by using both the point prediction at the new covariate and the empirical residuals obtained from the learning step, where both parametric and nonparametric methods can be used \cite{ban2019dynamic,sen2018learning,kannan2025data,kannan2020residuals}. 
Besides SPO- and ETO-type models, there is another stream of research that seeks to learn the optimal feature-to-decision mapping, e.g., by decision rules and other methods \cite[e.g.,][]{bertsimas2022data,ban2019big,oroojlooyjadid2020applying,qi2020learning,zhang2024optimal}.

Let us now briefly discuss decision-dependent (or endogenous) uncertainty in stochastic optimization. 
Modeling uncertainty with decision dependency is critical in many applications because failing to take this into account can lead to severely suboptimal solutions. This topic, therefore, has been extensively studied %Some classical approaches on modeling decision-dependent uncertainty include
in %the fields of
 dynamic programming~\citep[e.g.,][]{webster2012approximate}, stochastic programming \citep[e.g.,][]{goel2006class,hellemoetal18, lee2012newsvendor}, and robust optimization \citep[e.g.,][]{nohadani2018optimization, poss2013robust}. 
There are two types of decision dependency. In the first one, decisions affect probability distributions, and in the second one decisions affect when the uncertainty is revealed. Historically, the second type received more attention in the stochastic programming literature \citep[e.g.,][]{goel2006class}, with recent works investigating decision-dependent information discovery \citep{vayanos2011decision,vayanos2026robust}. 
Our work considers the first type. % of decision dependency.  

We now turn to the pairwise intersections of these three topics. 
Recently, there has been a vein of models that integrate decision dependency in a DRO framework \cite[e.g.,][]{luo2020distributionally,basciftci2021distributionally,noyan2018distributionally,yu2022multistage}. 
Similarly, %to decision-dependent models, 
there has been a number of works in the intersection of DRO and contextual stochastic optimization \cite[e.g.,][]{bertsimas2019dynamic,kannan2020residuals,bertsimas2017bootstrap,hanasusanto2013robust}. 
%{\it Decision-Dependent Contextual Optimization.}
However, the literature on contextual stochastic optimization with decision-dependent uncertainty is scarce.
%in the intersection of decision-dependent and contextual stochastic optimization is scarce. 
To date, only a very few works have considered this important setup.  %\citep{bertsimas2020predictive,bertsimas2022data,liu2022coupled}.
\cite{bertsimas2020predictive,bertsimas2022data} studied extensions of their methods when decisions affect the conditional distributions/expectations; see also \citep{Liu2023solving} for specialization of \citep{bertsimas2020predictive} 
to a pricing newsvendor problem. %\textit{} and an approximate gradient decsent algorithm to solve such problems.  
 On the other hand, \cite{liu2022coupled} used local linear regression models to predict decision-dependent uncertainty in problems without additional covariates. 
Note that this topic has been identified as an active and important future research area in the survey \cite{sadana2024survey}. 

To our knowledge, there exist no works that lie in the intersection of all three topics, and this is the first study that integrates decision-dependent DRO within a contextual setting. 
%To the best of our knowledge, this is the first study at the intersection of all three topics, integrating decision-dependent DRO within a contextual setting. 
Motivated by the important real-world applications with decision-dependent uncertainty and the prevalent limited-data regime in these applications, it builds upon the work of \cite{kannan2025data,kannan2020residuals} by incorporating decision dependency in a residuals-based contextual DRO.  This causes several subtleties and necessitates updates to the theoretical and computational methods, which will be investigated in later sections. 

\vspace*{-0.1in}
\subsection{Contributions}
We summarize the main contributions of this paper as follows. First, we introduce decision-dependent uncertainty within the residuals-based DRO framework for approximating the solution to problem \eqref{eq:ddcsp}. We formulate this model using a Wasserstein distance-based ambiguity set that utilizes both covariate and decision information. Second, we study asymptotic optimality, rate of convergence, and finite sample guarantees using our proposed model. Third, to solve the resulting ER-$\rm{D^3RO}$ model more efficiently, we develop a specialized Bender's decomposition algorithm with nonlinear cuts and establish its finite convergence under mild conditions for a class of ER-$\rm{D^3RO}$. Finally, we conduct numerical experiments on a shipment planning and pricing problem, where pricing decisions affect uncertain demand. We compare the efficacy of our ER-$\rm{D^3RO}$ model with its counterparts that ignore decision-dependent uncertainty or distributional robustness in this problem. Our results illustrate the benefits of incorporating decision-dependency in contextual DRO.  

We end with a remark that our proposed modeling framework accommodates a wide range of learning methods (parametric or nonparametric), including linear regression, Nadaraya-Watson kernel regression, and neural networks, among others. However, because of decision-dependent uncertainty, these learning methods must admit representations that can be embedded into optimization models. 
This requirement distinguishes our setting from its decision-independent counterpart, which does not require such representations inside the resulting optimization model.

\section{Problem Formulation}\label{sec:background}

% \vspace*{-0.049in}
\subsection{Preliminaries and Setting}\label{sec:preliminary}

Suppose that the ground truth relationship between the random parameter $Y$, the covariate $X$, and the decision $Z$ is given by $Y = f^*(X, Z) + \epsilon$, where $\epsilon$ denotes the associated regression error with zero mean that is independent\footnote{It is possible to consider heteroscedastic cases for which the error may depend on both $X$ and $Z$ by using the model  $Y = f^*(X, Z) + Q(X,Z) \epsilon$, where $Q(X,Z)$ is the square root of the conditional covariance matrix of the error. We refer interested readers to \cite{kannan2025data} for such an extension. For simplicity, we consider homoscedastic case in this paper.} of both $X$ and $Z$. Then, $f^*(x,z):=\mathbb{E}[Y|X=x, Z=z]$ describes the true regression function, which is not restricted to be linear. We assume that the true regression model $f^*$ belongs to a function class $\mathcal{F}$. However, the approximation framework and theoretical results discussed in subsequent sections still work if $f^*\not\in \mathcal{F}$ (i.e., when the model is misspecified) by replacing $f^*$ with the best approximation in $\mathcal{F}$, where the convergence and other results updated with respect to this best approximation; we refer interested readers to Remark 1 in~\cite{kannan2020residuals} for detailed discussions.
The supports of $X,\ Y,\ Z$, and $\epsilon$ are denoted by $\mathcal{X}\subseteq \mathbb{R}^{d_x},\  \mathcal{Y} \subseteq \mathbb{R}^{d_y},\ \mathcal{Z} \subseteq \mathbb{R}^{d_z}$, and $\Xi \subseteq \mathbb{R}^{d_y}$. Let $P_{\epsilon}$ and $P_{Y|X=x,Z=z}$ be the distribution of $\epsilon$ and the conditional distribution of $Y$ given $X=x,\ Z=z$, respectively. We assume that
 $\mathcal{Y}$ is a nonempty closed and convex set---which is required for the orthogonal projection onto $\mathcal{Y}$ to be unique and Lipschitz continuous---and $\mathcal{Z}$ is a nonempty and compact set. Under this setting, \eqref{eq:ddcsp} can be written as
\begin{equation}\label{eq: true problem}
  %\mbox{{\bf (DD-CSP)}}\quad  
  v^*(x):=\min_{z \in \mathcal{Z}} \Bigl\{g(z,x):=\mathbb{E}_\epsilon[c(z,f^*(x,z) + \epsilon)]\Bigr\},
\end{equation}
where $\mathbb{E}_\epsilon[\cdot]$ denotes expectation taken with respect to the distribution of $\epsilon$. Notice that  problem \eqref{eq: true problem} is well-defined and finite with a nonempty optimal solution set $S^*(x) \subseteq \mathcal{Z}$ if $g(\cdot,x)$ is lower semicontinuous on $\mathcal{Z}$ for a.e.\ $x \in \mathcal{X}$ and $\mathbb{E}_\epsilon[|c(z,f^*(x,z) + \epsilon)|] <+\infty$ for a.e.\ $x \in \mathcal{X},\ z \in \mathcal{Z}$.

Let $D_n:=\{(x^k, z^k, y^k)\}_{k=1}^n$ denote the joint observations of $(X,Z,Y)$. We assume throughout that the decision maker has access to such joint observations. Note that the observations $D_n$ do not have to be independent and identically distributed (i.i.d.). If we know the true regression function $f^*$, we can construct the following full-information decision-dependent SAA (FI-DD-SAA) using the data $D_n$:
\begin{equation}\label{eq:FI-DDSAA}
   % \mbox{{\bf (FI-DD-SAA)}}\qquad 
   \min_{z \in \mathcal{Z}} \Bigl\{g_n^*(z,x):= \frac{1}{n} \sum_{k=1}^n c(z,f^*(x,z) + \epsilon^k)\Bigr\},
   \tag{\bf{FI-DD-SAA}}
\end{equation}
where $\epsilon^k:= y^k - f^*(x^k,z^k)$ is the corresponding error under $f^*$. However, since the true regression function $f^*$ is unknown, we first need to use a regression method on data $D_n$ to get an estimated regression function $\hat{f}_n$. Then we calculate the empirical residuals $\hat{\epsilon}_n^k:= y^k - \hat{f}_n(x^k,z^k)$ from each observation $k$ to construct the following empirical residuals-based decision-dependent SAA (ER-DD-SAA) of problem \eqref{eq: true problem}:
\begin{equation}\label{eq: ersaa}
  % \mbox{{\bf (ER-DD-SAA)}} \qquad 
  \hat{v}_n^{ER}(x):= \min_{z \in \mathcal{Z}} \Bigl\{\hat{g}_n^{ER}(z,x):= \frac{1}{n} \sum_{k=1}^n c\bigl(z,{\rm{proj}}_{\mathcal{Y}}(\hat{f}_n(x,z) +\hat{\epsilon}_n^k)\bigr)\Bigr\},
  \tag{\bf{ER-DD-SAA}}
\end{equation} 
where ${\rm{proj}}_S(y)$ denotes the orthogonal projection of $y$ onto a nonempty closed convex set $S$. 
This projection step helps to avoid any unwanted quantities that can cause infeasibilities. For instance, a negative value of $\hat{f}_n(x,z)+\hat{\epsilon}_n^k$ (e.g., demand) can render the downstream optimization problem infeasible if $c(\cdot,\cdot)$ denotes the optimal value of a second-stage problem. 
In some applications, however, $\mathcal{Y}$ may not be known.
In these cases, a closed convex superset of $\mathcal{Y}$ could be used to leverage partial knowledge (e.g., nonnegativity) about the uncertainty, if such information is known. Alternatively, the superset can be set to $\mathbb{R}^{d_y}$, essentially removing the projection step. The analysis below then follows on this superset.  We continue our analysis with projection onto a known $\mathcal{Y}$. 
We end with a note that, unlike the decision-independent setting where the projection step could be done outside of the optimization, this projection is more complicated in the decision-dependent setting because it also involves the decisions $z$.

Because $\mathcal{Y}$ (or its superset) is nonempty closed and convex, the orthogonal projections are Lipschitz continuous. Then, for all $k=1,\ldots,n$ and for a.e.\ $x \in \mathcal{X},\ z \in \mathcal{Z}$, we have 
\begin{align}\label{eq:projection lip}
    \|{\rm{proj}}_\mathcal{Y} (\hat{f}_n(x,z)+\hat{\epsilon}_n^k)-(f^*(x,z)+\epsilon^k)\| \leq \|\tilde{\epsilon}_n^k(x,z)\|, 
\end{align}
where 
\begin{align}
    \tilde{\epsilon}_n^k(x,z):=&(\hat{f}_n(x,z)+ \hat{\epsilon}_n^k)-(f^*(x,z)+\epsilon^k)\nonumber\\
    =& \underbrace{(\hat{f}_n(x,z)-f^*(x,z))}_{\text{prediction error}} + \underbrace{(f^*(x^k,z^k)-\hat{f}_n(x^k,z^k))}_{\text{estimation error}}. \label{eq:tilde-epsilon}
\end{align}
The first term in \eqref{eq:tilde-epsilon} is the so-called ``prediction error'' at the new covariate $x \in \mathcal{X}$ and decision $z\in\mathcal{Z}$, and the second term in \eqref{eq:tilde-epsilon} is the ``estimation error'' at the training point $(x^k,z^k).$ 
This split is a key component of the subsequent analysis. %, which eventually allows a variety of regression methods to be used in the learning step.  
We end this section with a list of common notations used throughout the paper. Additional notation will be introduced as needed in the subsequent sections. 

\paragraph{Notation} Let $\|\cdot\|_q$ denote the $\ell_q$-norm for $q \in [1,+\infty]$, and let $\|\cdot\|$ represent the $\ell_2$-norm as a shorthand. We use $d_x, d_y, d_z$ to denote the dimensions of vectors $X, Y, Z$.
We use the shorthand notation to denote the set $[n]:=\{1,\ldots,n\}$.
Denote ``a.e.'', ``LLN'', ``i.i.d.'', ``w.r.t.'', ``s.t.'', ``Eq.'' and ``r.h.s.'' to be the abbreviations for ``almost every/everywhere'', ``the law of large numbers'', ``independent and identically distributed'', ``with respect to'', ``such that'', ``equation'', and ``right-hand side''. We denote $\xrightarrow{P},\ \xrightarrow{d},\ \xrightarrow{a.s.}$ to be convergence in probability, in distribution, and almost surely, respectively, with respect to the probability measure generating observations of $(X,Z,Y)$. 
Denote by $\delta_y$ the Dirac distribution concentrating unit mass at $y \in \mathbb{R}^{d_y}$. For two sets $S_1,S_2 \subseteq \mathbb{R}^{d_z}$, denote the deviation of $S_1$ from $S_2$ to be $\mathbb{D}(S_1,S_2):= \sup_{a \in S_1}{\rm{dist}}(a, S_2)$, where ${\rm{dist}}(a,S_2):=\inf_{b \in S_2}\|a-b\|.$ Denote $o_p$ and $O_p$ to be convergence in probability to zero and bounded in probability, respectively.

\subsection{Formulation}
\label{sec:formulation}
We now present a data-driven DRO formulation to approximate \eqref{eq: true problem} using the Wasserstein distance to construct ambiguity sets, where the ambiguity set depends on both the covariates $x$ and the decisions $z$. Let us start by introducing the Wasserstein distance.  Let $\mathcal{P}(S)$ be the space of probability distributions supported on $S \subseteq \mathbb{R}^{d_y}$. 
Denote the $p$-Wasserstein distance between probability distributions $P_1\in \mathcal{P}(S)$ and $P_2 \in \mathcal{P}(S)$ as $d_{W,\,p}(P_1,P_2)$, where the set of joint distributions with marginals $P_1$ and $P_2$ is represented by $\Pi(P_1,P_2)$. Then, the $p$-Wasserstein distance for $p \in [1,+\infty)$ is given by 
\[
d_{W,\, p}(P_1,P_2):=
        \left(\; \inf_{\pi \in \Pi(P_1,P_2)} \int_{S^2} \|y_1 - y_2\|^p d\pi(y_1,y_2)\right)^{1/p}.
\]
In the above definition, we use the $\ell_2$-norm as the reference distance (i.e., $\|y_1 - y_2\|$). However, the theoretical results derived in Section \ref{section: Wasserstein} can also be extended to Wasserstein distances defined using any $\ell_q$-norm  as the reference distance for $q \neq 2$. 

To construct a Wasserstein distance-based ambiguity set, let us formalize the empirical conditional distribution on which this ambiguity set is based. Toward this end, denote $P_n^*(x,z)$ as the {\it true} empirical distribution of $Y$ given $X=x,\ Z=z$ corresponding to \eqref{eq:FI-DDSAA}. Similarly, denote $\hat{P}_n^{ER}(x,z)$ as the {\it estimated} empirical distribution corresponding to \eqref{eq: ersaa}. These two distributions are defined as follows:
\begin{align*}
P_n^*(x,z):=\frac{1}{n} \sum_{k=1}^n \delta_{f^*(x,z)+\epsilon^k}\ \ \ \text{and}\ \ \ \hat{P}_n^{ER}(x,z):= \frac{1}{n} \sum_{k=1}^n \delta_{{\rm{proj}}_\mathcal{Y}(\hat{f}_n(x,z)+\hat{\epsilon}_n^k)}.
\end{align*}
Because the true regression function $f^*$ is not known, the observable empirical distribution $\hat{P}_n^{ER}(x,z)$ is used to form the ambiguity set. On the other hand, the true empirical distribution $P_n^*(x,z)$, while unobservable, forms a critical part of the subsequent analysis. 

To approximate problem \eqref{eq: true problem} under distributional ambiguity, we consider the following decision-dependent contextual DRO model, denoted as the ER-$\rm{D^3RO}$ problem: 
\vspace{-0.3cm}
\begin{equation}\label{eq:main problem}
 \hat{v}_n^{D^3RO}(x) := \min_{z \in \mathcal{Z}} \sup_{Q \in \hat{\mathcal{P}}_n(x,z)} \mathbb{E}_{Y \sim Q} [c(z,Y)]. 
 \tag{\textbf{ER-$\mathbf{\rm{D^3}}$RO}}
\end{equation}
The expectation in the above model is taken with respect to the worst-case distribution $Q$ of $Y(x,z)$ from the ambiguity set $\hat{\mathcal{P}}_n(x,z)$, which is constructed using the Wasserstein distance of order $p \in [1,+\infty)$ as \citep{esfahani2015data,gao2023distributionally}
    \begin{equation} \label{eq:wass as}
        \hat{\mathcal{P}}_n(x,z):=\Bigl\{Q \in P(\mathcal{Y}): d_{W,\, p} (Q,\hat{P}_n^{ER}(x,z))\leq \xi_n(\alpha,x)\Bigr\}.
    \end{equation}
This ambiguity set is centered at the empirical residuals-based distribution $\hat{P}_n^{ER}(x,z)$ with a given radius $\xi_n(\alpha,x) \geq 0$. 
We allow the radius $\xi_n(\alpha,x)$ to depend on a certain risk level $\alpha\in(0,1)$. Throughout this paper, we consider decision-independent radii that may depend on covariates $x$ but not on $z$. However, all theoretical results derived in Section \ref{section: Wasserstein} can be extended to the decision-dependent radii case $(\text{i.e., }\xi_n(\alpha,x,z))$.
  
  We will present computationally tractable reformulations of \eqref{eq:main problem} for a class of problems and regression setups in Section~\ref{sec:decomp}. We first examine the theoretical properties of optimal solutions and optimal value of \eqref{eq:main problem} under finite sample sizes and study their behavior asymptotically as more data becomes available. To facilitate this analysis, let $\hat{z}_n^{D^3RO}(x)$ denote an optimal solution to \eqref{eq:main problem} and $\hat{S}_n^{D^3RO}(x)$ denote the set of optimal solutions. We assume that the objective function of \eqref{eq:main problem} is real-valued and lower semicontinuous on $\mathcal{Z}$ for each $x \in \mathcal{X}$, which ensures that the optimal solution set $\hat{S}_n^{D^3RO}(x)$ is nonempty for each $x \in \mathcal{X}$.

\section{Theoretical Guarantees}
\label{section: Wasserstein}

In this section, we establish the asymptotic optimality, rate of convergence, and finite sample guarantees for \eqref{eq:main problem}.

\subsection{Finite Sample Certificate Guarantee}
We begin with the finite sample certificate guarantee, which identifies conditions under which the optimal value of \eqref{eq:main problem} provides an upper bound on the true (but unknown) expected cost of an optimal solution to \eqref{eq:main problem} with a desired probability. To establish this guarantee, we first make the following assumption for the regression estimate $\hat{f}_n$.
\begin{assumption}\label{ass:regression}
    For a.e.\ $x \in \mathcal{X},\ z \in \mathcal{Z}$ and any risk level $\alpha \in (0,1)$, there exist constants $\kappa_{p,n}(\alpha,x) > 0$ and $\kappa_{p,n}(\alpha) > 0$ such that  
    \begin{align}
        &\mathbb{P}\Bigl\{\big\|f^*(x,z)-\hat{f}_n(x,z)\big\|^p > \kappa_{p,n}^p(\alpha,x)\Bigr\} \leq \alpha, \ \ \text{and}\label{eq:condition2-1}
        \\
        &\mathbb{P}\Biggl\{\frac{1}{n}\sum_{k=1}^n \big\|f^*(x^k,z^k)-\hat{f}_n(x^k,z^k)\big\|^p > \kappa_{p,n}^p(\alpha)\Biggr\} \leq \alpha.\label{eq:condition2-2}
    \end{align}
\end{assumption}
\noindent
Note that for simplicity, we use the same notation $\kappa_{p,n}$ in Eqs.~\eqref{eq:condition2-1} and \eqref{eq:condition2-2}, but they represent different values depending on the argument and context. Here, the superscript $p$ denotes the power of the number.  Both constants $\kappa_{p,n}$ are independent of the decisions $z$. Especially,  Eq.~\eqref{eq:condition2-1} is different than the typical way such results may exist in the literature. For instance, for parametric regressions, such constants normally depend on {\it all} covariates ($x$ and $z$); we further discuss this next.  

Assumption~\ref{ass:regression} holds for parametric regression methods such as Ordinary Least Squares (OLS) and Lasso with $p=2$, where $\kappa_{2,n}^2(\alpha,x,z)= C_1\frac{\|x\|^2+\|z\|^2}{n} \log (\frac{1}{\alpha})$ in Eq.~\eqref{eq:condition2-1} and $\kappa_{2,n}^2(\alpha)=C_2\big(\frac{1}{n}\log(\frac{1}{\alpha})\big)$ in Eq.~\eqref{eq:condition2-2} for some constants $C_1,\, C_2>0$ \cite{hsu2012random,rigollet2023high,bunea2007sparsity}. 
Recall that we assume the feasible set $\mathcal{Z}$ to be nonempty and compact. Then, we can derive a decision-independent constant $\kappa_{2,n}^2(\alpha,x)$ for Eq.~\eqref{eq:condition2-1} by identifying an upper bound on $\|z\|^2$ and thus obtaining  $\kappa_{2,n}^2(\alpha,x)= \frac{C_1\|x\|^2+C_3}{n} \log (\frac{1}{\alpha})$ for some constant $C_3>0$. 
Similar bounds hold for $p \neq 2$.
We note that if Assumption \ref{ass:regression} holds for $p=2$, then it holds for any $p=[1,2)$ with the same constant $\kappa_{p,n}(\alpha,x)=\kappa_{2,n}(\alpha,x)$ by Jensen's (or power mean) inequality. This may be especially useful in the case of sub-Gaussian errors, which include zero-mean Gaussian errors \cite{kannan2020residuals}. 
Nonparametric regression methods, such as $k$-nearest neighbor ($k$NN),
% classification and regression trees (CART), and random forest (RF) regression, 
typically satisfy Assumption \ref{ass:regression} with $\kappa_{p,n}^p(\alpha,x)=\kappa_{p,n}^p(\alpha) = C_4(\frac{1}{n}\log(\frac{1}{\alpha}))^{C_5/{(d_x + d_z)}}$ for some constants $C_4,\, C_5 >0$ \citep[][Lemma 10]{bertsimas2019predictions}.

We also make the following assumption about Wasserstein  concentration inequality, which has implications for the data $\mathcal{D}_n$. Such an inequality provides bounds on the probability of how much the empirical distribution $P_n^*(x,z)$ deviates---in terms of the Wasserstein distance---from the true conditional distribution $P_{Y|X=x,Z=z}$.

    \begin{assumption}\label{prop: concentration}
    There exist constants $c_1,\ c_2,\ c_3\ge 0$ such that for all $\kappa>0, \ n\in\mathbb{N},\ x\in \mathcal{X},\ z\in\mathcal{Z}$,
    \begin{align}\label{eq:concentration}
    \mathbb{P} \Bigl\{d_{W,\, p}(P_n^*(x,z),P_{Y|X=x,Z=z}) \geq \kappa \Bigr\} \leq  c_1\exp{(-c_2n\kappa^{c_3})}, 
    \end{align}
where the constants $c_1,c_2,c_3$ depend on $\alpha$, $p$, and $d_y$. 
\end{assumption}

Assumption\ \ref{prop: concentration} is satisfied, for instance, when the observations are i.i.d.\ and the error distribution is light-tailed; see, e.g., \citep[][Theorem 2]{fournier2015rate}.  The light-tail assumption can be satisfied by sub-Gaussian distributions, including Gaussian distribution with mean zero, for $p\in[1,2)$. Assumption\ \ref{prop: concentration} can also be satisfied under non-i.i.d.\ data $\mathcal{D}_n$, such as time-series data \cite{dou2019distributionally}.

Next, we set the radius of the Wasserstein ambiguity set \eqref{eq:wass as} to
\begin{equation}\label{eq:radius def}
    \xi_n(\alpha,x) := \kappa_{p,n}^{(1)}(\alpha,x) + \kappa_{p,n}^{(2)}(\alpha)
\end{equation}
at a given risk level $\alpha\in(0,1)$ and covariate information $x$. Here $\kappa_{p,n}^{(1)}(\alpha,x):= \kappa_{p,n}(\frac{\alpha}{4},x) + \kappa_{p,n}(\frac{\alpha}{4})$, where $\kappa_{p,n}(\frac{\alpha}{4},x)$ and $\kappa_{p,n}(\frac{\alpha}{4})$ are defined in Assumption \ref{ass:regression} through Eq.s \eqref{eq:condition2-1} and \eqref{eq:condition2-2}, and $\kappa_{p,n}^{(2)}(\alpha)$ is defined as 
\begin{align}\label{kappa2}
\kappa_{p,n}^{(2)}(\alpha):= \left(\frac{\log(2c_1 \alpha^{-1})}{c_2n}\right)^{1/c_3},  
\end{align}
which is obtained by setting the r.h.s.\ of Eq.~\eqref{eq:concentration} to $\alpha/2$. The first component $\big(\text{i.e., }\kappa_{p,n}^{(1)}(\alpha,x)\big)$ of the radius in \eqref{eq:radius def} is due to the estimation of the regression function, and the second component $\big(\text{i.e., }\kappa_{p,n}^{(2)}(\alpha)\big)$ is due to the estimation of the true conditional distribution via a Wasserstein ball around the empirical distribution. While the second component is always independent of decisions $z$, the first component of the radius, $\kappa_{p,n}^{(1)}(\alpha,x)$, is expected to be larger than a decision-dependent counterpart $\big(\text{i.e., }\kappa_{p,n}^{(1)}(\alpha,x,z)\big)$ because we use a more conservative result in \eqref{eq:condition2-1} by removing the dependence on decisions $z$. 

In the rest of this section, we use $\hat{\mathcal{P}}_n(x,z;\xi_n(\alpha,x))$ to denote the ambiguity set \eqref{eq:wass as} to highlight its dependence on the radius $\xi_n(\alpha,x)$ given in Eq.~\eqref{eq:radius def}. We are now ready to present the finite sample certificate guarantee for the \eqref{eq:main problem} model.

\begin{theorem}[Finite sample certificate guarantee]\label{thm:fsg}
Suppose Assumptions  \ref{ass:regression} and \ref{prop: concentration} hold and $\alpha \in (0,1)$ is a given risk level. Then for a.e.\ $x \in \mathcal{X}$ and $z \in \mathcal{Z}$, the finite sample certificate guarantee $\mathbb{P}\big\{g(\hat{z}_n^{D^3RO}(x),x) \leq \hat{v}_n^{D^3RO}(x)\big\} \geq 1-\alpha$ holds for \eqref{eq:main problem} under the Wasserstein ambiguity set $\hat{\mathcal{P}}_n(x,z;\xi_n(\alpha,x))$ defined in Eq.~\eqref{eq:wass as} with the radius $\xi_n(\alpha,x)$ defined in Eq.~\eqref{eq:radius def}.
\end{theorem}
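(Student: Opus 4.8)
The plan is to follow the template of the finite-sample certificate proof for the decision-independent residuals-based DRO of \cite{kannan2020residuals}: identify a ``good event'' on which the true conditional law $P_{Y|X=x,Z=z}$ is contained in the constructed Wasserstein ball, show this event occurs with $\mathbb{P}^n$-probability at least $1-\alpha$, and then combine the elementary implication ``true law in the ambiguity set $\Rightarrow$ true expectation $\le$ worst-case expectation'' with the optimality of $\hat z_n^{D^3RO}(x)$ in \eqref{eq:main problem}.

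\emph{Step 1 (reduction to a containment event).} Fix a covariate $x$ at which Assumption~\ref{ass:regression} holds and set $\hat z:=\hat z_n^{D^3RO}(x)$. Consider the event
\begin{equation*}
\mathcal{A}_n(x):=\Bigl\{\,P_{Y|X=x,Z=z}\in\hat{\mathcal{P}}_n\!\bigl(x,z;\xi_n(\alpha,x)\bigr)\ \text{for every }z\in\mathcal{Z}\,\Bigr\}.
\end{equation*}
On $\mathcal{A}_n(x)$ we have in particular $P_{Y|X=x,Z=\hat z}\in\hat{\mathcal{P}}_n(x,\hat z;\xi_n(\alpha,x))$, hence
\begin{equation*}
g(\hat z,x)=\mathbb{E}_{Y\sim P_{Y|X=x,Z=\hat z}}\!\bigl[c(\hat z,Y(x,\hat z))\bigr]\ \le\ \sup_{Q\in\hat{\mathcal{P}}_n(x,\hat z;\xi_n(\alpha,x))}\mathbb{E}_{Y\sim Q}\!\bigl[c(\hat z,Y(x,\hat z))\bigr]\ =\ \hat v_n^{D^3RO}(x),
\end{equation*}
where the last equality holds because $\hat z$ attains the outer minimum in \eqref{eq:main problem}. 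Therefore it suffices to show $\mathbb{P}^n(\mathcal{A}_n(x))\ge 1-\alpha$.

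\emph{Step 2 (error decomposition and union bound).} Membership in the Wasserstein ball is the inequality $d_{W,p}(\hat P_n^{ER}(x,z),P_{Y|X=x,Z=z})\le\xi_n(\alpha,x)$. Proposition~\ref{prop: true and center} followed by Proposition~\ref{prop: epsilon bound} bounds, for every $z$, this distance by the sum of (I)~the prediction error $\|f^*(x,z)-\hat f_n(x,z)\|$, (II)~the estimation error $\bigl(\tfrac1n\sum_{k}\|f^*(x^k,z^k)-\hat f_n(x^k,z^k)\|^p\bigr)^{1/p}$, and (III)~the sampling error $d_{W,p}(P_n^*(x,z),P_{Y|X=x,Z=z})$. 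Since \eqref{eq:radius def} splits the radius as $\xi_n(\alpha,x)=\kappa_{p,n}(\tfrac\alpha4,x)+\kappa_{p,n}(\tfrac\alpha4)+\kappa_{p,n}^{(2)}(\alpha)$, the complement $\mathcal{A}_n(x)^c$ is contained in the union over $z\in\mathcal{Z}$ of $\{\text{(I)}>\kappa_{p,n}(\tfrac\alpha4,x)\}$, $\{\text{(II)}>\kappa_{p,n}(\tfrac\alpha4)\}$, and $\{\text{(III)}>\kappa_{p,n}^{(2)}(\alpha)\}$. Term (II) does not involve $z$; by Assumption~\ref{ass:regression}, Eq.~\eqref{eq:condition2-2}, its exceedance probability is at most $\tfrac\alpha4$, and combining this with Eq.~\eqref{eq:condition2-1} for (I) gives, at a fixed $z$, joint exceedance probability at most $\tfrac\alpha2$ for (I) and (II) --- this is exactly Proposition~\ref{prop:tilde epsilon}. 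Term (III) also does not involve $z$: $P_n^*(x,z)$ and $P_{Y|X=x,Z=z}$ are the empirical and true error laws shifted by the common vector $f^*(x,z)$, and the $p$-Wasserstein distance is translation invariant, so (III) equals $d_{W,p}$ between the error empirical distribution and $P_\epsilon$; Proposition~\ref{prop: concentration} (valid under Assumption~\ref{ass: lighttail} and the i.i.d.\ assumption on $\{\epsilon^k\}$), with $\kappa_{p,n}^{(2)}(\alpha)$ taken as in \eqref{kappa2} so that the right-hand side of \eqref{eq:concentration} equals $\tfrac\alpha2$, bounds its exceedance probability by $\tfrac\alpha2$. A union bound then gives $\mathbb{P}^n(\mathcal{A}_n(x)^c)\le\tfrac\alpha2+\tfrac\alpha2=\alpha$, which combined with Step~1 proves the claim.

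\emph{Main obstacle.} The only genuinely new difficulty relative to the covariate-only proof is in term (I). In the decision-independent case the containment event is $z$-free, whereas here $\hat z$ is a data-dependent minimizer, so the prediction-error bound must hold \emph{uniformly over $z\in\mathcal{Z}$}, not merely pointwise as in Proposition~\ref{prop:tilde epsilon}. This is exactly why Assumption~\ref{ass:regression}, Eq.~\eqref{eq:condition2-1}, is imposed with a decision-independent constant $\kappa_{p,n}(\cdot,x)$: for the parametric (OLS, Lasso) and nonparametric ($k$NN) regressions discussed after that assumption, the exceedance event $\{\|f^*(x,z)-\hat f_n(x,z)\|^p>\kappa_{p,n}^p(\tfrac\alpha4,x)\}$ can be taken free of $z$, with compactness of $\mathcal{Z}$ absorbing any $\|z\|$-dependence into the constant; the ``union over $z\in\mathcal{Z}$'' in Step~2 then costs nothing, the exceedance probability of (I) over all of $\mathcal{Z}$ stays at $\tfrac\alpha4$, and the estimate $\mathbb{P}^n(\mathcal{A}_n(x)^c)\le\alpha$ survives verbatim. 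This uniformization --- and the ensuing extra conservatism of $\xi_n(\alpha,x)$ relative to a decision-dependent radius --- is where the work lies; everything else is a faithful transcription of the decision-independent argument.
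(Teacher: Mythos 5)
Your proof is correct and follows essentially the same route as the paper's: the same reduction to containment of $P_{Y|X=x,Z=z}$ in the Wasserstein ball, the same decomposition via Propositions~\ref{prop: true and center}--\ref{prop:tilde epsilon} and the concentration inequality of Proposition~\ref{prop: concentration}, and the same $\tfrac{\alpha}{2}+\tfrac{\alpha}{2}$ union bound with the radius split of Eq.~\eqref{eq:radius def}. The one place you go beyond the paper is in formulating the containment event uniformly over $z\in\mathcal{Z}$ and explaining why the union over $z$ costs nothing (the exceedance events being $z$-free for the regression classes covered by Assumption~\ref{ass:regression}); the paper's proof states the Wasserstein bound only pointwise in $z$ and asserts its equivalence with the certificate at the data-dependent $\hat z_n^{D^3RO}(x)$ without comment, so your added care here is a small improvement in rigor rather than a divergence in method.
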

\begin{proof}
Due to the definition of \eqref{eq:main problem}, showing the finite sample certificate guarantee is equivalent to showing $\mathbb{P} \big\{d_{W,\, p} (\hat{P}_n^{ER}(x,z),P_{Y|X=x,Z=z}) > \xi_n(\alpha,x)\big\} \leq \alpha$ for a.e.\ $x \in \mathcal{X}$ and $z \in \mathcal{Z}$. By the triangle inequality, we have
\allowdisplaybreaks
\small
\begin{equation*}
d_{W,p}\bigl(\hat P_n^{ER}(x,z),P_{Y|X=x,Z=z}\bigr)
\le \underbrace{d_{W,p}\bigl(\hat P_n^{ER}(x,z),P_n^*(x,z)\bigr)}_{(i)}
+ \underbrace{d_{W,p}\bigl(P_n^*(x,z),P_{Y|X=x,Z=z}\bigr)}_{(ii)}.
\end{equation*}
\normalsize
First, based on Eq.\ \eqref{eq:projection lip} and the definition of the $p$-Wasserstein distance, $(i)$ satisfies
\small
\begin{align*}
d_{W,p}\bigl(\hat P_n^{ER}(x,z),P_n^*(x,z)\bigr)
&\le \left(\frac{1}{n} \sum_{k=1}^n \big\|{\rm{proj}}_\mathcal{Y} (\hat{f}_n(x,z)+\hat{\epsilon}_n^k)-(f^*(x,z)+\epsilon^k)\big\|^p\right)^{1/p} \\
& \le \left(\frac{1}{n}\sum_{k=1}^n |\tilde\epsilon_n^k(x,z)|^p\right)^{1/p}. 
\end{align*}
\normalsize
Therefore, by Assumption\ \ref{ass:regression} and the probability inequality 
$\mathbb P\{A+B >a+b\} \le \mathbb P\{A>a\} + \mathbb\{B >b\}$, we have for a.e. $x \in \mathcal X$ and $ z \in \mathcal Z$
\footnotesize
    \begin{align*}
        & \mathbb P \left\{
d_{W,p}\bigl(\hat P_n^{ER}(x,z),P_n^*(x,z)\bigr)
>
\kappa_{p,n}^{(1)}(\alpha,x)
\right\}
    \\
        \le &\ \mathbb{P}\bigg\{\left(\frac{1}{n} \sum_{k=1}^n \|\tilde{\epsilon}_n^k(x,z)\|^p\right)^{1/p} > \kappa_{p,n} (\frac{\alpha}{4},x) + \kappa_{p,n} (\frac{\alpha}{4})\bigg\} 
    \\
        \overset{(a)}{\le} & \ \mathbb{P}\left\{\big\|f^*(x,z)-\hat{f}_n(x,z)\big\| +  \left(\frac{1}{n}\sum_{k=1}^n \big\|f^*(x^k,z^k) - \hat{f}_n(x^k,z^k)\big\|^p\right)^{1/p} > \kappa_{p,n} (\frac{\alpha}{4},x) + \kappa_{p,n} (\frac{\alpha}{4})\right\} 
    \\
         \leq &\  \mathbb{P}\Bigg\{\big\|f^*(x,z)-\hat{f}_n(x,z)\big\| > \kappa_{p,n}(\frac{\alpha}{4},x)\Bigg\} + \mathbb{P}\left\{\frac{1}{n} \sum_{k=1}^n \big\|f^*(x^k,z^k) - \hat{f}_n(x^k,z^k)\big\|^p > \kappa_{p,n}^p(\frac{\alpha}{4})\right\} 
    \\
         \overset{(b)}{\leq} &\ \frac{\alpha}{4} + \frac{\alpha}{4} = \frac{\alpha}{2},
    \end{align*}
%    \end{minipage}
%    }
\normalsize
%\hspace*{-0.1in}
where inequality $(a)$ holds due to the definition of $\tilde{\epsilon}_n^k(x,z)$ in Eq.\ \eqref{eq:tilde-epsilon} and inequality $(b)$ is due to Assumption \ref{ass:regression}.
Second, $(ii)$ can be bounded by Assumption \ref{prop: concentration}, i.e.,
\begin{equation*}
\mathbb P \left\{
d_{W,p}\bigl(P_n^*(x,z),P_{Y|X=x,Z=z}\bigr)
>
\kappa_{p,n}^{(2)}(\alpha)
\right\}
\le \frac{\alpha}{2}.
\end{equation*}
Combining the above, we obtain
\footnotesize
\begin{align*}
    & \mathbb P \biggl\{
d_{W,p}\bigl(\hat P_n^{ER}(x,z),P_{Y|X=x,Z=z}\bigr)
>
\kappa_{p,n}^{(1)}(\alpha,x)+\kappa_{p,n}^{(2)}(\alpha)
\biggr\} 
\\
\le\;&
\mathbb P\biggl\{
d_{W,p}\bigl(\hat P_n^{ER}(x,z),P_n^*(x,z)\bigr)
>
\kappa_{p,n}^{(1)}(\alpha,x)
\biggr\}
+
\mathbb P\biggl\{
d_{W,p}\bigl(P_n^*(x,z),P_{Y|X=x,Z=z}\bigr)
>
\kappa_{p,n}^{(2)}(\alpha)
\biggr\}
\\
\le\;& \alpha.
\end{align*}
\normalsize
\end{proof}

\subsection{Asymptotic Results}
Next, we study the asymptotic properties of the optimal value and optimal solutions of \eqref{eq:main problem}, providing conditions under which they achieve consistency and asymptotic optimality w.r.t.\ true problem \eqref{eq: true problem}. We also establish rates of convergence.  We begin this analysis by making the following two assumptions regarding the cost function $c$. 

\begin{assumption}\label{assum:lower-semicontinuous}
    The function $c(\cdot,Y(x,\cdot))$ is lower semi-continuous on $\mathcal{Z}$ for a.e.\ $x \in \mathcal{X}$. 
     Furthermore, $c(z,Y)$ is continuous on the second argument and
   there exists $C_p \geq 0$ such that
%    \begin{align*} %\label{eq:ass 3 discuss}
$
        |c(z,Y(x,z))| \leq C_p(1+\|Y(x,z)\|^p) \ \ \forall x \in \mathcal{X},\ \forall z \in \mathcal{Z}.
$
%    \end{align*}
\end{assumption}

\begin{assumption}
\label{ass: lipschitz cont}
(a) If we use $p$-Wasserstein distance with $p \in [1,\infty)$, then for each $z \in \mathcal{Z}$, the function $c(z,Y)$ is Lipschitz continuous on the second argument with Lipschitz constant $L_1(z)$; 
(b) If we use $p$-Wasserstein distance with $p \in [2, \infty)$, then for each $z\in\mathcal{Z}$, $\nabla c(z,Y)$ exists and is Lipschitz continuous on the second argument with Lipschitz constant $L_2(z)$,  where  $\mathbb{E}[\nabla\|c(z,Y)\|^2] < +\infty$. In the case of $p \in [2, \infty)$, it is enough if either of the conditions (a) or (b) hold.
\end{assumption}

Assumption \ref{assum:lower-semicontinuous} in particular is different in the current decision-dependent setting because it must consider the effect of $z$ on the uncertain parameters $Y$.
Note that the first part of Assumption \ref{assum:lower-semicontinuous} is satisfied, for instance, if (i) $c(z,Y)$ is lower semi-continuous on the first argument and either one of the following two conditions hold: (ii-a) the regression function $f^*(x,\cdot)$ is continuous on $\mathcal{Z}$ for a.e.\ $x \in \mathcal{X}$ or (ii-b) $f^*(x,\cdot)$ is lower semi-continuous on $\mathcal{Z}$ 
and $c(z,Y)$ is nondecreasing on the second argument. Condition (i) and Assumption \ref{ass: lipschitz cont} can be satisfied by a fairly large class of stochastic optimization problems, including two-stage stochastic linear and mixed-integer programs with continuous recourse; see, e.g., \cite[][Appendix E]{kannan2025data}. Since our uncertain parameter $Y$ depends on the decision $z$, we further need continuity or lower semi-continuity of the ground truth regression function $f^*(x,\cdot)$ (i.e., (ii-a) or (ii-b)) to ensure the lower semi-continuity of the cost function $c(\cdot,Y(x,\cdot))$ on $\mathcal{Z}$. Note that Condition (ii-a) or (ii-b) is satisfied by a broad range of regression functions (e.g., linear regression, exponential regression, and so forth).

For the class of problems whose cost functions $c$ satisfy the above conditions, we can establish convergence properties of optimal values and solutions of \eqref{eq:main problem} with radius $\xi_n(\alpha_n,x)$ under a suitable sequence of risk levels $\{\alpha_n\}$. We present these conditions next.

\begin{assumption}\label{ass:risk}
    The sequence of risk levels $\{\alpha_n\} \subset (0,1)$ satisfies $\sum_{n}\alpha_n<+\infty$, and $\lim_{n \rightarrow \infty}\xi_n(\alpha_n,x)=0$ for a.e.\ $x \in \mathcal{X}$ with the radius defined in \eqref{eq:radius def}. 
\end{assumption}

We are now ready to establish the asymptotic properties regarding the ER-$\rm{D^3RO}$ model. Toward this end, we first introduce two important lemmas.
The first lemma establishes convergence of the distributions in the ambiguity set, and the second provides useful bounds that play a key role in proving this section's main results. 

\begin{lemma}\label{lemma: Q conv}
Suppose Assumptions \ref{ass:regression}, \ref{prop: concentration}, and \ref{ass:risk} hold. Let $\{Q_n(x,z)\}$ be a sequence of distributions with $Q_n(x,z) \in \hat{\mathcal{P}}_n(x,z; \xi_n(\alpha_n,x))$. Then, 
     (i) 
for a.e.\ $x \in \mathcal{X}$ and each $z\in\mathcal{Z}$, 
    for $n$ large enough, we a.s.\ have
    $d_{W,\, p}(P_{Y|X=x,Z=z},Q_n(x,z))
    \leq 2\xi_n(\alpha_n,x)$. 
     Furthermore, 
%    (iii) 
     (ii)
     $\{Q_n(x,z)\}$ converges a.s.\ under the $p$-Wasserstein distance
     %(thus weakly) 
     to $P_{Y|X=x,Z=z}$, that is, $
    \mathbb{P}\Big\{\lim_{n \rightarrow \infty} d_{W,\, p} (P_{Y|X=x, Z=z}, Q_n(x,z))=0\Big\}=1.
    $
\end{lemma}
\begin{proof}
    Following the proof of Lemma 3.7 in \cite{esfahani2015data}, by triangle inequality, we obtain
    \small
    \begin{align*}
        %& 
        d_{W,\, p}(Q_n(x,z), P_{Y|X=x,Z=z}) %\\
        \leq & d_{W,\, p} (P_{Y|X=x,Z=z}, \hat{P}_n^{ER}(x,z)) + d_{W,\, p} (Q_n(x,z), \hat{P}_n^{ER}(x,z))\\
     \leq & d_{W,\, p} (P_{Y|X=x,Z=z}, \hat{P}_n^{ER}(x,z)) + \xi_n(\alpha_n,x).
    \end{align*}
    \normalsize
    The proof of Theorem \ref{thm:fsg} implies $ \mathbb{P}\bigl\{d_{W,\, p}(P_{Y|X=x,Z=z}, \hat{P}_n^{ER}(x,z)) \leq \xi_n(\alpha_n,x)\bigr\}\geq 1 - \alpha_n$. As a result, $\mathbb{P}\bigl\{d_{W,\, p}(Q_n(x,z), P_{Y|X=x,Z=z}) \leq 2\xi_n(\alpha_n,x)\bigr\} \geq 1-\alpha_n$. By the  Borel-Cantelli lemma \citep[][Theorem 2.18]{kallenberg1997foundations} and Assumption~\ref{ass:risk}, we obtain (i). Because $\lim_{n \rightarrow \infty}\xi_n(\alpha_n,x)=0$ for a.e.\ $x \in \mathcal{X}$, we obtain (ii). 
\end{proof}

\begin{lemma}\label{lemma: v inequality}
    Suppose Assumptions \ref{ass:regression}, \ref{prop: concentration}, and \ref{ass:risk} hold. Then for $n$ large enough, we a.s.\ have 
    \begin{align}\label{eq:lemma equation}
        v^*(x) \leq g(\hat{z}_n^{D^3RO}(x),x) \leq \hat{v}_n^{D^3RO}(x)
    \end{align}
    for a.e.\ $x \in \mathcal{X}$.
    Let $z^*(x)$ be an optimal solution to  problem \eqref{eq: true problem}. If Assumption \ref{ass: lipschitz cont}(a) holds, then we a.s.\ have for a.e.\ $x \in \mathcal{X}$ and $n$ large enough
    \begin{align}\label{eq: 4a}
    \hat{v}_n^{D^3RO}(x) \leq v^*(x) + 2L_1(z^*(x)) \xi_n(\alpha_n,x).
    \end{align}
    On the other hand, if Assumption \ref{ass: lipschitz cont}(b) holds, then we a.s.\ have for a.e.\ $x \in \mathcal{X}$ and $n$ large enough
    \begin{align}\label{eq:4b}
    \hat{v}_n^{D^3RO}(x) \leq v^*(x) + 2\left(\mathbb{E}[\|\nabla c(z^*(x),Y)\|^2]\right)^{\frac{1}{2}} \xi_n(\alpha_n,x) + 4L_2(z^*(x)) \xi_n^2(\alpha_n,x).
    \end{align}
\end{lemma}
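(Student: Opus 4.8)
\textbf{Proof plan for Lemma~\ref{lemma: v inequality}.}
The plan is to establish the three displayed bounds in turn, combining the sandwich structure of the DRO objective with the Wasserstein-distance control from Lemma~\ref{lemma: Q conv}. First I would prove the chain \eqref{eq:lemma equation}. The right inequality $g(\hat{z}_n^{D^3RO}(x),x) \leq \hat{v}_n^{D^3RO}(x)$ follows once we know that, for $n$ large enough and a.e.\ $x$, the true conditional distribution $P_{Y|X=x,Z=z}$ lies in the ambiguity set $\hat{\mathcal{P}}_n(x,z;\xi_n(\alpha_n,x))$ when evaluated at the decision $z = \hat{z}_n^{D^3RO}(x)$: indeed, Proposition~\ref{prop: true and center}, Proposition~\ref{prop:tilde epsilon}, Proposition~\ref{prop: concentration}, together with the Borel--Cantelli lemma (using $\sum_n \alpha_n < \infty$ from Assumption~\ref{ass:risk}), give that a.s.\ $d_{W,\,p}(\hat{P}_n^{ER}(x,z), P_{Y|X=x,Z=z}) \leq \xi_n(\alpha_n,x)$ for all $n$ large. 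Hence the worst-case expectation over $\hat{\mathcal{P}}_n(x,z)$ dominates $\mathbb{E}_{Y\sim P_{Y|X=x,Z=z}}[c(z,Y(x,z))] = g(z,x)$ at $z = \hat{z}_n^{D^3RO}(x)$. The left inequality $v^*(x) \leq g(\hat{z}_n^{D^3RO}(x),x)$ is immediate because $\hat{z}_n^{D^3RO}(x) \in \mathcal{Z}$ and $v^*(x)$ is the minimum of $g(\cdot,x)$ over $\mathcal{Z}$.

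For the upper bound \eqref{eq: 4a}, I would evaluate the ER-$\rm{D^3RO}$ objective at the feasible (generally suboptimal) point $z^*(x)$, so that $\hat{v}_n^{D^3RO}(x) \leq \sup_{Q \in \hat{\mathcal{P}}_n(x,z^*(x))} \mathbb{E}_{Y\sim Q}[c(z^*(x),Y(x,z^*(x)))]$. Every such $Q$ satisfies, by Lemma~\ref{lemma: Q conv}(ii), $d_{W,\,p}(P_{Y|X=x,Z=z^*(x)},Q) \leq 2\xi_n(\alpha_n,x)$ a.s.\ for $n$ large. Under Assumption~\ref{ass: lipschitz cont}(a), $c(z^*(x),\cdot)$ is Lipschitz with constant $L_1(z^*(x))$, so Kantorovich--Rubinstein duality (for $p=1$) or Jensen/Hölder to reduce $p>1$ to $p=1$ gives $|\mathbb{E}_{Y\sim Q}[c(z^*(x),Y)] - \mathbb{E}_{Y\sim P}[c(z^*(x),Y)]| \leq L_1(z^*(x)) d_{W,\,1}(Q,P) \leq L_1(z^*(x)) d_{W,\,p}(Q,P) \leq 2 L_1(z^*(x))\xi_n(\alpha_n,x)$, where $P = P_{Y|X=x,Z=z^*(x)}$. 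Since $\mathbb{E}_{Y\sim P}[c(z^*(x),Y)] = g(z^*(x),x) = v^*(x)$, taking the supremum over $Q$ yields \eqref{eq: 4a}. For \eqref{eq:4b}, under Assumption~\ref{ass: lipschitz cont}(b) I would instead use a second-order Taylor-type bound: for a $C^{1}$ function with $L_2$-Lipschitz gradient, $|\mathbb{E}_Q[c] - \mathbb{E}_P[c]| \leq (\mathbb{E}_P[\|\nabla c(z^*(x),Y)\|^2])^{1/2} d_{W,\,2}(Q,P) + \tfrac{L_2(z^*(x))}{2} d_{W,\,2}^2(Q,P)$ (this is the standard smooth transport estimate, cf.\ the Wasserstein-gradient bounds used in the decision-independent analysis), and substituting $d_{W,\,2}(Q,P) \leq d_{W,\,p}(Q,P) \leq 2\xi_n(\alpha_n,x)$ gives \eqref{eq:4b} after absorbing the factor $1/2$ into the $4L_2$ term.

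The main technical obstacle is the decision-dependence of the second argument of $c$: because $Y = Y(x,z)$, the relevant true distribution in the comparison is $P_{Y|X=x,Z=z^*(x)}$ (resp.\ $P_{Y|X=x,Z=\hat{z}_n^{D^3RO}(x)}$), and one must be careful that Lemma~\ref{lemma: Q conv} is applied at the \emph{correct} decision argument and that the Lipschitz/smoothness constants $L_1(z^*(x))$, $L_2(z^*(x))$ and the moment $\mathbb{E}[\|\nabla c(z^*(x),Y)\|^2]$ are evaluated at that same $z^*(x)$; this is why Assumption~\ref{ass: lipschitz cont} is stated pointwise in $z$. A secondary subtlety is that the ``for $n$ large enough, a.s.'' statements must hold simultaneously for the (random) decision $\hat{z}_n^{D^3RO}(x)$; this is handled by noting that the a.s.\ event $\{d_{W,\,p}(\hat{P}_n^{ER}(x,z),P_{Y|X=x,Z=z}) \leq \xi_n(\alpha_n,x)\ \forall n \text{ large}\}$ from Lemma~\ref{lemma: Q conv} holds uniformly in $z \in \mathcal{Z}$ (the radius $\xi_n(\alpha_n,x)$ does not depend on $z$), so it in particular holds at $z = \hat{z}_n^{D^3RO}(x)$, and the measurability of the selection $\hat{z}_n^{D^3RO}(x)$ keeps all quantities well defined.
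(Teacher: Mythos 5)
Your proposal is correct and follows essentially the same route as the paper's proof: the sandwich \eqref{eq:lemma equation} comes from the true conditional distribution lying in the ambiguity set a.s.\ for large $n$ (the paper packages this as Theorem~\ref{thm:fsg} plus Borel--Cantelli), and the upper bounds \eqref{eq: 4a}--\eqref{eq:4b} come from evaluating the DRO objective at $z^*(x)$ and applying the Lipschitz (Kantorovich--Rubinstein) and smooth-gradient transport estimates over a Wasserstein ball of radius $2\xi_n(\alpha_n,x)$ around $P_{Y|X=x,Z=z^*(x)}$ via Lemma~\ref{lemma: Q conv}(ii), exactly as the paper does by citing Theorem 5 of \cite{kuhn2019wasserstein} and Lemma 2 of \cite{gao2023finite}. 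Your second-order estimate even yields $2L_2(z^*(x))\xi_n^2$, which is consistent with (slightly sharper than) the stated $4L_2(z^*(x))\xi_n^2$.
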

\begin{proof}
    By Theorem \ref{thm:fsg}, we have  
    $\mathbb{P}\bigl\{v^*(x) \leq g(\hat{z}_n^{D^3RO}(x),x) \leq \hat{v}_n^{D^3RO}(x)\bigr\} \geq 1- \alpha_n\ \forall n\in\mathbb{N}$ for a.e.\ $x \in \mathcal{X}$. 
    Then, by the  Borel-Cantelli lemma and Assumption~\ref{ass:risk},
    for $n$ large enough, we a.s.\ have $v^*(x) \leq g(\hat{z}_n^{D^3RO}(x),x) \leq \hat{v}_n^{D^3RO}(x)$ for a.e.\ $x \in \mathcal{X}$.

    Suppose Assumption \ref{ass: lipschitz cont}(a) holds for $p \geq 1$, where $c(z,\cdot)$ is Lipschitz continuous with Lipschitz constant $L_1(z)$. Denote $\bar{\mathcal{P}}_{1,n}(x,z; \xi_n(\alpha_n,x))
    \allowbreak :=\{Q \in P(\mathcal{Y}): d_{W,\, 1} (Q,\hat{P}_n^{ER}(x,z))\leq \xi_n(\alpha_n,x)\}$. Since $\hat{\mathcal{P}}_n(x,z; \xi_n(\alpha_n,x)) \subseteq \bar{\mathcal{P}}_{1,n}(x,z; \xi_n(\alpha_n,x))$ for all  $p \in [1,+\infty)$, we a.s.\ have for a.e.\ $x \in \mathcal{X}$ and $n$ large enough, 
    \begin{align*}
        \hat{v}_n^{D^3RO}(x) &\leq \sup_{ Q \in \bar{\mathcal{P}}_{1,n}(x,z^*(x); \xi_n(\alpha_n,x))} \mathbb{E}_{Y \sim Q} [c(z^*(x),Y)] \\
        &\leq g(z^*(x),x) + 2L_1(z^*(x)) \xi_n(\alpha_n,x),
    \end{align*}
    where the first inequality is because $z^*(x)\in\mathcal{Z}$ is a feasible but potentially suboptimal solution to \eqref{eq:main problem} in addition to changing $\hat{\mathcal{P}}_n$ to $\bar{\mathcal{P}}_{1,n}$ and the second inequality is due to Lemma \ref{lemma: Q conv}(i), Assumption \ref{ass: lipschitz cont}(a), and by Kantorovich-Rubinstein Theorem \citep[e.g.,][Theorem 2]{kuhn2019wasserstein}.

    Suppose Assumption \ref{ass: lipschitz cont}(b) holds for $p \geq 2$, where $\nabla c(z,\cdot)$ is Lipschitz continuous with Lipschitz constant $L_2(z)$. Denote $\bar{\mathcal{P}}_{2,n}(x,z; \xi_n(\alpha_n,x)):=\{Q \in P(\mathcal{Y}): d_{W,\, 2} (Q,\hat{P}_n^{ER}(x,z))\leq \xi_n(\alpha_n,x)\}$. Since  $\hat{\mathcal{P}}_n(x,z; \xi_n(\alpha_n,x)) \subseteq \bar{\mathcal{P}}_{2,n}(x,z; \xi_n(\alpha_n,x))$ for all $p\in [2,+\infty)$, we a.s.\ have for a.e.\ $x \in \mathcal{X}$, 
    \small
    \begin{align*}
        \hat{v}_n^{D^3RO}(x) & \leq \sup_{ Q \in \bar{\mathcal{P}}_{2,n}(x,z^*(x); \xi_n(\alpha_n,x))} \mathbb{E}_{Y \sim Q} [c(z^*(x),Y)]\\
        & \leq g(z^*(x),x) + 2\left(\mathbb{E}[\|\nabla c(z^*(x),Y)\|^2]\right)^{\frac{1}{2}} \xi_n(\alpha_n,x) + 4L_2(z^*(x))\xi_n^2(\alpha_n,x),
    \end{align*}
    \normalsize
    where the second inequality is due to Lemma \ref{lemma: Q conv}(i), Assumption \ref{ass: lipschitz cont}(b) and Lemma 2 in~\cite{gao2023finite}.
\end{proof}

Armed with these lemmas, we now establish the consistency and asymptotic optimality of the optimal value and solutions of \eqref{eq:main problem}. 

\begin{theorem}[Consistency and asymptotic optimality] \label{thm:consistency-wass}
Suppose Assumptions \ref{ass:regression}, \ref{prop: concentration}, \ref{assum:lower-semicontinuous}, and \ref{ass:risk} hold. Then for a.e.\ $x \in \mathcal{X}$, the optimal value and solution of\ \eqref{eq:main problem} under Wasserstein ambiguity set $\hat{\mathcal{P}}_n(x,z;\xi_n(\alpha_n,x))$ defined in Eq.~\eqref{eq:wass as} with the radius $\xi_n(\alpha_n,x)$ defined in Eq.~\eqref{eq:radius def} are consistent and asymptotically optimal, that is,
\[
\hat{v}_n^{D^3RO}(x) \xrightarrow{P} v^*(x),\  \ \text{dist}(\hat{z}_n^{D^3RO}(x),S^*(x)) \xrightarrow{P} 0,\ \  g(\hat{z}_n^{D^3RO}(x),x) \xrightarrow{P} v^*(x).
\]
\end{theorem}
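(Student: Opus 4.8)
The plan is to follow the classical epiconvergence / two-sided bracketing strategy for establishing consistency of optimal values and solutions, now combined with the decision-dependent Wasserstein apparatus already developed in Lemmas \ref{lemma: Q conv} and \ref{lemma: v inequality}. First I would fix a covariate $x \in \mathcal{X}$ in the full-measure set on which all the preceding results hold, and observe that Lemma \ref{lemma: v inequality} already gives, almost surely and for $n$ large, the sandwich $v^*(x) \leq g(\hat{z}_n^{D^3RO}(x),x) \leq \hat{v}_n^{D^3RO}(x)$. So the whole theorem reduces to showing $\hat{v}_n^{D^3RO}(x) \to v^*(x)$ a.s.\ (hence in probability); the convergence of $g(\hat{z}_n^{D^3RO}(x),x)$ to $v^*(x)$ is then immediate from the sandwich, and the distance statement ${\rm dist}(\hat{z}_n^{D^3RO}(x),S^*(x)) \to 0$ follows from a standard argument: any limit point $\bar z$ of $\{\hat z_n^{D^3RO}(x)\}$ (which exists by compactness of $\mathcal{Z}$) satisfies $g(\bar z,x) = \lim g(\hat z_n^{D^3RO}(x),x) = v^*(x)$ using lower semicontinuity of $g(\cdot,x)$ (Assumption \ref{assum:lower-semicontinuous}) and the value convergence, so $\bar z \in S^*(x)$.

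For the upper bound on $\limsup_n \hat v_n^{D^3RO}(x)$: take $z^*(x) \in S^*(x)$ and apply \eqref{eq: 4a} (or \eqref{eq:4b}), which bounds $\hat v_n^{D^3RO}(x)$ by $v^*(x)$ plus terms proportional to $\xi_n(\alpha_n,x)$ and $\xi_n^2(\alpha_n,x)$; since $\xi_n(\alpha_n,x) \to 0$ by Assumption \ref{ass:risk}, we get $\limsup_n \hat v_n^{D^3RO}(x) \leq v^*(x)$ a.s. For the lower bound, I would argue $\liminf_n \hat v_n^{D^3RO}(x) \geq v^*(x)$: for each $n$ pick $\hat z_n := \hat z_n^{D^3RO}(x)$ and a near-worst-case distribution $Q_n(x,\hat z_n) \in \hat{\mathcal{P}}_n(x,\hat z_n;\xi_n(\alpha_n,x))$ such that $\mathbb{E}_{Y\sim Q_n(x,\hat z_n)}[c(\hat z_n, Y(x,\hat z_n))] \geq \hat v_n^{D^3RO}(x) - 1/n$. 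By Lemma \ref{lemma: Q conv}(ii)–(iii), $d_{W,p}(P_{Y|X=x,Z=\hat z_n}, Q_n(x,\hat z_n)) \leq 2\xi_n(\alpha_n,x) \to 0$ a.s. Passing to a subsequence where $\hat z_n \to \bar z$, I would use the Lipschitz/growth control on $c$ (Assumptions \ref{assum:lower-semicontinuous} and \ref{ass: lipschitz cont}) together with joint lower semicontinuity in $(z,Q)$ of the expected-cost functional — convergence in $p$-Wasserstein implies convergence of $\mathbb{E}[\|Y\|^p]$ and hence of $\mathbb{E}[c(z,\cdot)]$ for functions with $p$-th order growth — to conclude $\liminf_n \hat v_n^{D^3RO}(x) \geq \liminf_n \mathbb{E}_{Q_n}[c(\hat z_n,\cdot)] \geq \mathbb{E}_{P_{Y|X=x,Z=\bar z}}[c(\bar z,\cdot)] = g(\bar z,x) \geq v^*(x)$.

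The main obstacle is the passage to the limit in the lower-bound argument: unlike the decision-independent case, the conditioning distribution $P_{Y|X=x,Z=\hat z_n}$ itself moves with $n$ (because $\hat z_n$ does), so one cannot simply invoke a single fixed limiting distribution. The fix is to control two things simultaneously — the drift of the true conditional distribution as $z$ varies (this is why Assumption \ref{assum:lower-semicontinuous} now demands lower semicontinuity of $c(\cdot,Y(x,\cdot))$ through $f^*(x,\cdot)$, i.e.\ conditions (ii-a)/(ii-b)) and the Wasserstein gap $2\xi_n(\alpha_n,x) \to 0$ — and combine them via a triangle-type estimate so that along the convergent subsequence $\hat z_n \to \bar z$ the relevant worst-case expectations converge to the expectation under the genuinely limiting distribution $P_{Y|X=x,Z=\bar z}$. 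Everything else — the sandwich, compactness extraction, and the $\xi_n\to 0$ bookkeeping — is routine given the lemmas already in hand, and the almost-sure versions upgrade to convergence in probability trivially.
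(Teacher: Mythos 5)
There is a genuine gap in your upper-bound step: you propose to bound $\limsup_n \hat v_n^{D^3RO}(x)$ by invoking \eqref{eq: 4a} (or \eqref{eq:4b}) from Lemma \ref{lemma: v inequality}, but those two inequalities are \emph{conditional on Assumption \ref{ass: lipschitz cont}} (Lipschitz continuity of $c(z,\cdot)$, resp.\ of $\nabla c(z,\cdot)$), which is \emph{not} among the hypotheses of this theorem. The paper is explicit about this right after the statement: Assumption \ref{assum:lower-semicontinuous} alone suffices for consistency, and Assumption \ref{ass: lipschitz cont} is only added later to obtain rates. The paper's proof instead gets $\limsup_n \hat v_n^{D^3RO}(x) \le v^*(x)$ without any Lipschitz hypothesis: fix $z^*(x)\in S^*(x)$, pick a $\delta$-near-maximizer $Q_n^*(x,z)$ of the inner supremum over $\hat{\mathcal P}_n(x,z;\xi_n(\alpha_n,x))$, use Lemma \ref{lemma: Q conv}(iii) to get a.s.\ convergence of $Q_n^*$ to $P_{Y|X=x,Z=z}$ in $p$-Wasserstein distance, and then invoke Villani's characterization of Wasserstein convergence together with the $p$-th order growth bound in \eqref{eq:ass 3 discuss} to pass to the limit in $\mathbb{E}_{Q_n^*}[c(z^*(x),Y)]$. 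Your argument would prove the theorem only under strictly stronger hypotheses than stated.

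A secondary issue: your elaborate $\liminf$ argument (near-worst-case distributions at the moving point $\hat z_n$, drift of $P_{Y|X=x,Z=\hat z_n}$, etc.) is unnecessary and again leans on Assumption \ref{ass: lipschitz cont}. The sandwich $v^*(x)\le g(\hat z_n^{D^3RO}(x),x)\le \hat v_n^{D^3RO}(x)$ from \eqref{eq:lemma equation} — which you correctly cite at the outset and which holds under the stated assumptions — already gives $\liminf_n \hat v_n^{D^3RO}(x)\ge v^*(x)$ for free, since the left inequality is just feasibility of $\hat z_n^{D^3RO}(x)$ in the true problem. The "main obstacle" you identify (the conditioning distribution moving with $n$) therefore never arises in the value-convergence part; it is handled in the solution-convergence part exactly as you describe at the end of your first paragraph, via lower semicontinuity of $c(\cdot,Y(x,\cdot))$ and Fatou's lemma, which matches the paper.
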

\begin{proof}
    By the proof of Theorem \ref{thm:fsg}, we  have $\mathbb{P} \big\{d_{W,\, p}(\hat{P}_n^{ER}(x,z), P_{Y|X=x, Z=z}) > \xi_n(\alpha_n,x)\big\} \leq \alpha_n$ for a.e.\ $x \in \mathcal{X}$ and $z \in \mathcal{Z}$.
    And by Lemma \ref{lemma: Q conv}(ii), for any $Q_n(x,z) \in \hat{\mathcal{P}}_n(x,z,\xi_n(\alpha_n, x))$, we a.s.\ have
    $\lim_{n \rightarrow \infty}d_{W,\, p}(P_{Y|X=x,Z=z}, \allowbreak Q_n(x,z)) = 0\ \text{for a.e.}\
    x \in \mathcal{X}$ and $z \in \mathcal{Z}.$ 
    Because convergence with respect to the Wasserstein distance implies weak convergence, this means $Q_n(x,z)$ converges weakly to $P_{Y|X=x,Z=z}$ for a.e.\ $x\in\mathcal{X}$ and $z\in\mathcal{Z}$ in the space of distributions with finite $p$-th order moments \citep[][Theorem 6.9]{villani2009optimal}. 
    By Eq.~\eqref{eq:lemma equation} in Lemma \ref{lemma: v inequality}, we know for a.e.\ $x\in\mathcal{X}$, $v^*(x) \leq g(\hat{z}_n^{D^3RO}(x),x) \leq \hat{v}_n^{D^3RO}(x)$ for $n$ large enough.
    Therefore, to show $\lim_{n \rightarrow \infty} \hat{v}_n^{D^3RO}(x) = v^*(x) = \lim_{n \rightarrow \infty} g(\hat{z}^{D^3RO}_n(x),x)$ in probability (or a.s.) for a.e.\ $x \in \mathcal{X}$, it suffices to prove $\limsup_{n \rightarrow \infty} \hat{v}_n^{D^3RO}(x) \leq v^*(x)$ a.s.\ for a.e.\ $x \in \mathcal{X}$. 
    
    Let $z^*(x)$ be an optimal solution to the true problem \eqref{eq: true problem} for a.e.\ $x \in \mathcal{X}$.  Given any $\delta>0$, choose $Q_n^*(x,z^*(x)) \in \hat{\mathcal{P}}_n(x,z^*(x);\xi_n(\alpha_n,x))$ such that it satisfies
    \begin{align}\label{eq: choose qn}
        \sup_{Q \in \hat{\mathcal{P}}_n(x,z^*(x);\xi_n(\alpha_n,x))} \mathbb{E}_{Y\sim Q}[c(z^*(x),Y)] \leq \mathbb{E}_{Y \sim Q_n^*(x,z^*(x))}[c(z^*(x),Y)] + \delta.
    \end{align} 
    Then for a.e.\ $ x \in \mathcal{X}$, we a.s.\ have
    \begin{align*}
        \limsup_{n \rightarrow \infty} \hat{v}_n^{D^3RO}(x)  {\leq}&  \limsup_{n \rightarrow \infty} \sup_{Q \in \hat{\mathcal{P}}_n(x,z^*(x);\xi_n(\alpha_n,x))} \mathbb{E}_{Y \sim Q} [c(z^*(x),Y)] 
        \\
         \overset{(a)}{\leq}& \limsup_{n \rightarrow \infty}\mathbb{E}_{Y \sim Q_n^*(x,z^*(x))} [c(z^*(x),Y)] + \delta 
        \\
        \overset{(b)}{=}& g(z^*(x),x) + \delta 
         = v^*(x) + \delta,
    \end{align*}
    where inequality $(a)$ above holds by the choice of $Q_n^*(x,z^*(x))$ in Eq.~\eqref{eq: choose qn}, and inequality $(b)$ holds by using \citep[][Definition 6.8]{villani2009optimal} because $Q^*_n(x,z^*(x))$ converges weakly to $P_{Y|X=x,Z=z}$ and by the second part of Assumption \ref{assum:lower-semicontinuous}. 
    Since $\delta>0$ is arbitrary, we obtain $\limsup_{n \rightarrow \infty} \hat{v}_n^{D^3RO}(x) \leq v^*(x)$ a.s.\ for a.e.\ $x \in \mathcal{X}$.

    Next, we show that any accumulation point of the solution sequence $\{\hat{z}_n^{D^3RO}(x)\}$ is a.s.\ an element of $S^*(x)$. 
    Let $\bar{z}(x)$ be an accumulation point of $\hat{z}_n^{D^3RO}(x)$. For simplicity, assume $\lim_{n\rightarrow \infty} \hat{z}_n^{D^3RO}(x)=\bar{z}(x)$ (otherwise, use a subsequence). Then for a.e.\ $ x \in \mathcal{X}$, we a.s.\ have
    \begin{align}
    v^*(x) \leq g(\bar{z}(x),x) & \overset{(a)}{\le} \mathbb{E}\left[\liminf_{n \rightarrow \infty} c(\hat{z}_n^{D^3RO}(x),Y(x,\hat{z}_n^{D^3RO}(x)))\right] \nonumber 
    \\
    & \overset{(b)}{\le} \liminf_{n \rightarrow \infty} g(\hat{z}_n^{D^3RO}(x),x) \overset{(c)}\leq v^*(x), \nonumber %\label{eq:wass-consistency}
    \end{align}
    where $(a)$ holds due to the lower semicontinuity of $c(\cdot,Y(x,\cdot))$ on $\mathcal{Z}$ for each $x \in \mathcal{X}$ in Assumption \ref{assum:lower-semicontinuous}, and $(b)$ holds due to Fatou's lemma under Assumption \ref{assum:lower-semicontinuous}. Now, from Eq.\ \eqref{eq:lemma equation} in Lemma \ref{lemma: v inequality} and the results derived in the first part of the proof, we have for a.e.\ $ x \in \mathcal{X}$, $\liminf_{n \rightarrow \infty} g(\hat{z}_n^{D^3RO}(x),x) \leq \lim_{n \rightarrow \infty}\hat{v}_n^{D^3RO}(x) = v^*(x)$, which shows inequality (c).    
    Therefore, we a.s.\ have $\bar{z}(x) \in S^*(x)$. 

    The rest of the proof follows directly from the proof of Theorem 9 in \cite{kannan2020residuals}. 
\end{proof}

In order to establish the rate of convergence, as opposed to convergence as in Theorem \ref{thm:consistency-wass}, we additionally need Assumption \ref{ass: lipschitz cont}. Assumption \ref{assum:lower-semicontinuous} on its own was enough for Theorem \ref{thm:consistency-wass}. 

\begin{theorem}[Rate of convergence]\label{thm:roc}
 Suppose Assumptions \ref{ass:regression}--\ref{ass:risk} hold. Then for a.e.\ $x \in \mathcal{X}$, \eqref{eq:main problem} under Wasserstein ambiguity set $\hat{\mathcal{P}}_n(x,z;\xi_n(\alpha_n,x))$ defined in Eq.~\eqref{eq:wass as} with the radius $\xi_n(\alpha_n,x)$ defined in Eq.~\eqref{eq:radius def} satisfies
 \[
 |\hat{v}_n^{D^3RO}(x)-v^*(x)|=O_p(\xi_n(\alpha_n,x))\ \text{and}\  |g(\hat{z}_n^{D^3RO}(x),x)-v^*(x)|=O_p(\xi_n(\alpha_n,x)).
 \]
\end{theorem}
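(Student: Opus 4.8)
The plan is to upgrade the qualitative convergence established in Theorem \ref{thm:consistency-wass} to a quantitative $O_p(\xi_n(\alpha_n,x))$ bound by leaning on the explicit inequalities already assembled in Lemma \ref{lemma: v inequality}. The two-sided bracketing $v^*(x) \le g(\hat{z}_n^{D^3RO}(x),x) \le \hat{v}_n^{D^3RO}(x)$ from Eq.~\eqref{eq:lemma equation} is the backbone: it reduces both claimed rates to controlling the single quantity $\hat{v}_n^{D^3RO}(x) - v^*(x)$ from above, since $0 \le g(\hat{z}_n^{D^3RO}(x),x) - v^*(x) \le \hat{v}_n^{D^3RO}(x) - v^*(x)$ whenever $n$ is large enough for the lemma to apply.

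First I would invoke Lemma \ref{lemma: v inequality}: under Assumption \ref{ass: lipschitz cont}(a) we get $\hat{v}_n^{D^3RO}(x) - v^*(x) \le 2 L_1(z^*(x))\,\xi_n(\alpha_n,x)$ a.s.\ for $n$ large, and under Assumption \ref{ass: lipschitz cont}(b) we get $\hat{v}_n^{D^3RO}(x) - v^*(x) \le 2(\mathbb{E}[\|\nabla c(z^*(x),Y)\|^2])^{1/2}\,\xi_n(\alpha_n,x) + 4 L_2(z^*(x))\,\xi_n^2(\alpha_n,x)$. In the latter case, since $\xi_n(\alpha_n,x)\to 0$ by Assumption \ref{ass:risk}, the quadratic term is eventually dominated by the linear term, so in both cases the upper bound is $O(\xi_n(\alpha_n,x))$ for a.e.\ $x$, with a constant depending only on $x$ (through $z^*(x)$ and the Lipschitz/moment quantities at $z^*(x)$, which are finite by Assumptions \ref{assum:lower-semicontinuous} and \ref{ass: lipschitz cont}). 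Combined with the lower bound $\hat{v}_n^{D^3RO}(x) - v^*(x) \ge 0$ from Eq.~\eqref{eq:lemma equation}, this gives $|\hat{v}_n^{D^3RO}(x) - v^*(x)| = O_p(\xi_n(\alpha_n,x))$, and then the sandwich $0 \le g(\hat{z}_n^{D^3RO}(x),x) - v^*(x) \le \hat{v}_n^{D^3RO}(x) - v^*(x)$ immediately yields $|g(\hat{z}_n^{D^3RO}(x),x) - v^*(x)| = O_p(\xi_n(\alpha_n,x))$ as well.

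There is one subtlety to address carefully rather than gloss over: Lemma \ref{lemma: v inequality} asserts its inequalities "a.s.\ for $n$ large enough," i.e., on a probability-one event whose threshold $n$ may be random. To convert this into the $O_p$ statement as customarily defined, I would note that an a.s.\ eventual deterministic bound implies boundedness in probability of the ratio $|\hat{v}_n^{D^3RO}(x) - v^*(x)|/\xi_n(\alpha_n,x)$ — for every $\varepsilon>0$ there is $M$ (here any $M$ slightly above $2L_1(z^*(x))$, resp.\ the analogous constant in case (b)) and $N$ such that $\mathbb{P}^n$ of the event exceeds $1-\varepsilon$ for $n\ge N$ — which is exactly the meaning of $O_p$. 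Alternatively, one can route through the finite-sample statement: Lemma \ref{lemma: Q conv}(i) and Theorem \ref{thm:fsg} give the needed concentration at each finite $n$ with failure probability $\alpha_n \to 0$, and redoing the estimate of Eq.~\eqref{eq:4a}/\eqref{eq:4b} on that good event yields the bound with probability $1-\alpha_n$, again delivering $O_p(\xi_n(\alpha_n,x))$. The main (and essentially only) obstacle is this bookkeeping between "a.s.\ eventually" and "$O_p$"; the analytic content is entirely inherited from Lemma \ref{lemma: v inequality}, so the proof should be short. I would also remark that since $\xi_n(\alpha_n,x) = \kappa_{p,n}^{(1)}(\alpha_n,x) + \kappa_{p,n}^{(2)}(\alpha_n)$ with explicit forms from Eqs.~\eqref{eq:radius def}--\eqref{kappa2}, one can further translate this into an $O_p(n^{-r/2})$ rate under the regression rates discussed after Assumption \ref{ass:regression}, matching guarantee item 2 in Section \ref{sec:theoretical}.
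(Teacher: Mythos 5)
Your proposal is correct and follows essentially the same route as the paper's proof: both reduce the claim to the two-sided bracketing in Eq.~\eqref{eq:lemma equation} together with the upper bounds \eqref{eq: 4a} and \eqref{eq:4b} from Lemma~\ref{lemma: v inequality}, splitting into cases according to whether Assumption~\ref{ass: lipschitz cont}(a) or (b) holds. Your additional care in converting the ``a.s.\ for $n$ large enough'' statement into the $O_p$ form is a welcome refinement of a step the paper leaves implicit, but it does not change the argument.
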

\begin{proof}
    When Assumptions \ref{ass:regression}, \ref{prop: concentration}, \ref{assum:lower-semicontinuous}, and \ref{ass:risk} hold, we have the consistency and asymptotic results in Theorem \ref{thm:consistency-wass}. Lemma \ref{lemma: v inequality} also holds under Assumptions\ \ref{ass:regression}, \ref{prop: concentration}, and~\ref{ass:risk}. Suppose Assumption \ref{ass: lipschitz cont}(a) holds with $p \geq 1$, then the desired result follows from Eq.~\eqref{eq: 4a}. On the other hand, suppose Assumption \ref{ass: lipschitz cont}(b) holds with $p \geq 2$, then the desired result follows from Eq.~\eqref{eq:4b}.
\end{proof}

\subsection{Finite Sample Solution Guarantee} 

We end the theoretical analysis by showing a finite sample result on the optimal solutions $\hat{z}_n^{D^3RO}(x)$ of \eqref{eq:main problem}. 
To establish the finite sample solution guarantee, we consider the following large deviation properties of the regression estimate $\hat{f}_n$, which provides a stronger version of Assumption \ref{ass:regression}.

\begin{assumption}\label{ass:reg constant}
For any given constant $\kappa > 0$, there exist positive constants $K_{p}(\kappa,x), \bar{K}_{p}(\kappa), \beta_{p}(\kappa,x)$, and $\bar{\beta}_{p}(\kappa)$ that satisfy
\small
\begin{align*}
        &\mathbb{P}\left\{\big\|f^*(x,z)-\hat{f}_n(x,z)\big\|^p > \kappa^p \right\} \leq K_{p}(\kappa,x) \exp{\Bigl(-n \beta_{p}(\kappa,x)\Bigr)},~\text{for a.e.}~x \in \mathcal{X},\ z \in \mathcal{Z},
        \\
        &\mathbb{P}\left\{\frac{1}{n}\sum_{k=1}^n \big\|f^*(x^k,z^k)-\hat{f}_n(x^k,z^k)\big\|^p > \kappa^p\right\} \leq \bar{K}_{p}(\kappa) \exp{\Bigl(-n \bar{\beta}_{p}(\kappa)\Bigr)}.
    \end{align*}
    \normalsize
\end{assumption}

   Both constants $K_{p}(\kappa,x)$ and $\beta_{p}(\kappa,x)$ in Assumption \ref{ass:reg constant} are independent of the decisions $z$. This assumption again differs from the literature as such constants normally depend on all covariates ($x$ and $z$). 
Theorem 2.2 of \cite{rigollet2023high} and Remark 12 of \cite{hsu2012random} provide some parametric regression setups that satisfy this assumption for $p=2$ with specific constants $K_{2}(\kappa,x,z),$ $\bar{K}_{2}(\kappa)$, $\beta_{2}(\kappa,x,z),$ and $\bar{\beta}_{2}(\kappa)$, and Lemma 10 of \cite{bertsimas2019predictions} provides some nonparametric regression setups that satisfy this assumption. For instance, OLS satisfies this assumption with decision-independent bounds $K_{2}(\kappa,x,z) = C_7 \exp(d_x + d_z),\ \bar{K}_{2}(\kappa) = C_8 \exp (d_x + d_z),\ \bar{\beta}_{2}(\kappa) = C_9 \frac{\kappa^2}{\sigma^2 d_y}$ and decision-dependent bound $\beta_{2}(\kappa,x,z) = C_{10} \frac{ \kappa^2}{\sigma^2 d_y (\|x\|^2 + \|z\|^2)}$ for some constants $C_7, C_8, C_9, C_{10} > 0$ \cite{rigollet2023high}. We can further identify an upper bound on $\|z\|^2$ (as $\mathcal{Z}$ is compact) to obtain a decision-independent constant $\beta_2(\kappa,x)$ for Assumption \ref{ass:reg constant}. 
 
 Using Assumption \ref{ass:reg constant}, we first derive the following proposition, which bounds the probability that the optimality gap---measured w.r.t.\ the true problem \eqref{eq: true problem}---of an optimal solution $\hat{z}_n^{D^3RO}(x)$ to \eqref{eq:main problem} exceeds $\kappa>0$. Using this result, we then establish the finite sample solution guarantee in Theorem \ref{thm:fssg}. 

    \begin{proposition}\label{prop:oos-guarantee}
	Suppose Assumptions \ref{ass:regression}, \ref{prop: concentration}, \ref{ass: lipschitz cont}, \ref{ass:risk}, and \ref{ass:reg constant} hold. Then for a.e.\ $x \in \mathcal{X}$ and any $\kappa >0$, there exist positive constants $\tilde{\Omega}(\kappa,x), \ \tilde{\omega}(\kappa,x)$ such that the solution of \eqref{eq:main problem} with the radius $\xi_n(\alpha,x)$ defined in \eqref{eq:radius def} and risk level $\alpha =\tilde{\Omega}(\kappa,x)\exp(-n\tilde{\omega}(\kappa,x))$ satisfies 
\begin{align}
\mathbb{P}\Bigl\{g(\hat{z}_n^{D^3RO}(x), x) - v^*(x) > \kappa \Bigr\} \leq 2\alpha.\label{eq:finite-sample-solution}
\end{align}
\end{proposition}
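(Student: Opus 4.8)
The plan is to control the out-of-sample suboptimality $g(\hat z_n^{D^3RO}(x),x)-v^*(x)$ on a high-probability event by splitting it into a certificate term and a radius term, and then to calibrate the risk level $\alpha=\alpha_n:=\tilde\Omega(\kappa,x)\exp(-n\tilde\omega(\kappa,x))$ --- using the explicit large-deviation rates of Assumption~\ref{ass:reg constant} --- so that the radius term falls below $\kappa$. Concretely, for a.e.\ $x\in\mathcal{X}$ I would write
\[
g(\hat z_n^{D^3RO}(x),x)-v^*(x)=\bigl(g(\hat z_n^{D^3RO}(x),x)-\hat v_n^{D^3RO}(x)\bigr)+\bigl(\hat v_n^{D^3RO}(x)-v^*(x)\bigr).
\]
Theorem~\ref{thm:fsg} applied with risk level $\alpha_n$ supplies an event $E_1$ with $\mathbb{P}^n(E_1)\ge 1-\alpha_n$ on which the first bracket is $\le 0$. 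For the second bracket I would reuse the reasoning behind Eq.~\eqref{eq: 4a} of Lemma~\ref{lemma: v inequality} at finite $n$: picking $z^*(x)\in S^*(x)$, Propositions~\ref{prop: true and center}, \ref{prop:tilde epsilon} and \ref{prop: concentration} (the last calibrated as in \eqref{kappa2}) give an event $E_2$ with $\mathbb{P}^n(E_2)\ge 1-\alpha_n$ on which $d_{W,p}(\hat P_n^{ER}(x,z^*(x)),P_{Y|X=x,Z=z^*(x)})\le \xi_n(\alpha_n,x)$; combined with the triangle inequality, Wasserstein--Lipschitz duality (Assumption~\ref{ass: lipschitz cont}(a)) and the growth bound of Assumption~\ref{assum:lower-semicontinuous}, this yields $\hat v_n^{D^3RO}(x)\le v^*(x)+2L_1(z^*(x))\,\xi_n(\alpha_n,x)$. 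Hence on $E_1\cap E_2$, whose probability is $\ge 1-2\alpha_n$,
\[
g(\hat z_n^{D^3RO}(x),x)-v^*(x)\le 2L_1(z^*(x))\,\xi_n(\alpha_n,x),
\]
with $2L_1(z^*(x))\xi_n(\alpha_n,x)$ replaced by $2(\mathbb{E}_Y[\|\nabla c(z^*(x),Y)\|^2])^{1/2}\xi_n(\alpha_n,x)+4L_2(z^*(x))\xi_n^2(\alpha_n,x)$ under Assumption~\ref{ass: lipschitz cont}(b). It then suffices to choose $(\tilde\Omega(\kappa,x),\tilde\omega(\kappa,x))$ so that the right-hand side is at most $\kappa$ for every $n$.

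For the calibration, note that with $\alpha_n=\tilde\Omega e^{-n\tilde\omega}$ we have $\log(1/\alpha_n)=\log(1/\tilde\Omega)+n\tilde\omega$, which grows linearly in $n$. Inverting the large-deviation bounds of Assumption~\ref{ass:reg constant} (solving $K_p(\kappa',x)\exp(-n\beta_p(\kappa',x))=\alpha_n/4$, and likewise for the barred constants) produces a constant $\kappa_{p,n}^{(1)}(\alpha_n,x)$ that remains bounded in $n$ and, for the standard OLS/Lasso-type rate $\beta_p\propto(\kappa')^2/\|x\|^2$, converges to something of order $(\|x\|^2\tilde\omega)^{1/2}$, hence vanishes as $\tilde\omega\downarrow 0$; similarly $\kappa_{p,n}^{(2)}(\alpha_n)$ in \eqref{kappa2} converges to $(\tilde\omega/c_2)^{\min\{p/d_y,1/2\}}$, again vanishing as $\tilde\omega\downarrow 0$. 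I would thus first fix $\tilde\omega>0$ small enough (depending on $\kappa$, $x$, and $L_1(z^*(x))$) that, evaluating at $\tilde\Omega=1$, $2L_1(z^*(x))\,\xi_n(e^{-n\tilde\omega},x)\le\kappa$ for all $n\ge n_0$ with some finite threshold $n_0=n_0(\kappa,x)$; then set $\tilde\Omega:=\max\{1,\tfrac{1}{2}e^{n_0\tilde\omega}\}$, so that $2\alpha_n=2\tilde\Omega e^{-n\tilde\omega}\ge 1$ for all $n<n_0$ (making the claim trivial there) while, because $\xi_n(\cdot,x)$ is nonincreasing in the risk level and $\alpha_n\ge e^{-n\tilde\omega}$, the bound $2L_1(z^*(x))\xi_n(\alpha_n,x)\le\kappa$ still holds for $n\ge n_0$; moreover $\alpha_n\le 1/2<1$ there, so Theorem~\ref{thm:fsg} is applicable. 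Combining with the previous paragraph gives $\mathbb{P}^n\{g(\hat z_n^{D^3RO}(x),x)\ge v^*(x)+\kappa\}\le 2\alpha_n=2\alpha$ for every $n$; under Assumption~\ref{ass: lipschitz cont}(b) one shrinks $\tilde\omega$ further to also absorb the $4L_2(z^*(x))\xi_n^2(\alpha_n,x)$ term.

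The hard part will be this last calibration: one has to exhibit a single pair $(\tilde\Omega,\tilde\omega)$ that simultaneously keeps $2L_1(z^*(x))\xi_n(\alpha_n,x)\le\kappa$ for all sufficiently large $n$ while $\alpha_n$ still decays exponentially, which forces one to track the precise joint $\alpha$- and $n$-dependence of both radius components $\kappa_{p,n}^{(1)}$ and $\kappa_{p,n}^{(2)}$. This is exactly why the large-deviation Assumption~\ref{ass:reg constant} is required rather than the weaker Assumption~\ref{ass:regression}, and why its \emph{decision-independent} form is essential: otherwise the threshold $n_0$ and the constants $\tilde\Omega,\tilde\omega$ would inherit a dependence on the random optimizer $\hat z_n^{D^3RO}(x)$, which is not permitted in the statement (there, $\tilde\Omega,\tilde\omega$ may depend only on $\kappa$ and $x$). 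A smaller but genuine technical point is that Theorem~\ref{thm:fsg} and the bound underlying Eq.~\eqref{eq: 4a} must be invoked in their finite-$n$ probabilistic form rather than the ``a.s., $n$ large'' form, and that the two bad events $E_1^c$ and $E_2^c$ are bounded separately, which is what yields the factor $2$ in the final $2\alpha$.
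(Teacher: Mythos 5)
Your proposal is correct and follows essentially the same route as the paper's proof: decompose the suboptimality into the certificate term (controlled by Theorem~\ref{thm:fsg} with probability $1-\alpha$) and the term $\hat{v}_n^{D^3RO}(x)-v^*(x)$ (controlled via the Lemma~\ref{lemma: v inequality} argument at $z^*(x)$ with probability $1-\alpha$), then calibrate $\alpha=\tilde\Omega(\kappa,x)\exp(-n\tilde\omega(\kappa,x))$ by inverting the large-deviation bounds of Assumption~\ref{ass:reg constant} and Eq.~\eqref{kappa2} so that $2L_1(z^*(x))\xi_n(\alpha,x)\le\kappa$ (resp.\ the Assumption~\ref{ass: lipschitz cont}(b) analogue). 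Your treatment is somewhat more explicit than the paper's about the small-$n$ regime and the monotonicity of the radius in the risk level, but the underlying argument and the source of the factor $2$ are identical.
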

\begin{proof}\label{proof prop 5}
    Observe that
    \small
    \begin{align*}
        %&
        \mathbb{P} \big\{g(\hat{z}_n^{D^3RO}(x),x) > v^*(x) + \kappa\big\}
        %\\
        =&\; \mathbb{P} \big\{g(\hat{z}_n^{D^3RO}(x),x) - \hat{v}_n^{D^3RO}(x) + \hat{v}_n^{D^3RO}(x) > v^*(x) + \kappa\big\} 
        \\
         \leq &\; \alpha + \mathbb{P} \big\{\hat{v}_n^{D^3RO}(x) > v^*(x) + \kappa\big\},
    \end{align*}
    \normalsize
    where the inequality follows from the probability inequality $\mathbb P\{A + B>a+b\} \le \mathbb P\{A>a\} + \mathbb P\{B>b\}$ with $a=0, b=v^*(x) + \kappa, A=g(\hat{z}_n^{D^3RO}(x),x) - \hat{v}_n^{D^3RO}(x), B=\hat{v}_n^{D^3RO}(x)$ and by using Theorem \ref{thm:fsg}. Suppose Assumption \ref{ass: lipschitz cont}(a) holds with $p \geq 1$ and Lipschitz constant $L_1(z)$, then following the proof of Lemma \ref{lemma: v inequality}, we have for any $z^*(x) \in S^*(x)$, $\mathbb{P} \{\hat{v}_n^{D^3RO}(x) > v^*(x) + 2L_1(z^*(x))\xi_n(\alpha,x)\} \leq \alpha$. If we choose the risk level $\alpha \in (0,1)$ s.t. $2L_1(z^*(x)) \kappa_{p,n}^{(1)}(\alpha,x) \leq \frac{\kappa}{2}$, and $2L_1(z^*(x)) \kappa_{p,n}^{(2)}(\alpha) \leq \frac{\kappa}{2}$, then we have $2L_1(z^*(x))\xi_n(\alpha,x) \leq \kappa$, which further implies
    \begin{align}
        \mathbb{P}\{g(\hat{z}_n^{D^3RO}(x),x) > v^*(x) + \kappa\} \leq 2\alpha.\label{eq:proposition-5}
    \end{align}
    Denoting $\bar{\kappa}:=\frac{\kappa}{8L_1(z^*(x))}$, by Assumption \ref{ass:reg constant}, if we choose $\alpha$ s.t. for a.e.\ $x \in \mathcal{X}$, 
    \begin{align*}
        \alpha \geq 4\max\left\{K_{p}(\bar{\kappa},x) \exp (-n \beta_{p} (\bar{\kappa},x)),\bar{K}_{p}(\bar{\kappa}) \exp (-n \bar{\beta}_{p} (\bar{\kappa}))\right\},
    \end{align*} 
    then the constant $\kappa_{p,n}^{(1)}(\alpha,x)$ in Eq.~\eqref{eq:radius def} satisfies $2L_1(z^*(x)) \kappa_{p,n}^{(1)} (\alpha,x) \leq \frac{\kappa}{2}$. Furthermore, by Eq.~\eqref{kappa2}, we know for a.e.\ $x \in \mathcal{X}$, if $\alpha \geq c_1(\exp(-c_2 n (\frac{\kappa}{4L_1(z^*(x))})^{1/s})$ with $s = \min\{p/d_y,1/2\}$ or $p/a$, then we have $2L_1(z^*(x)) \kappa_{p,n}^{(2)}(\alpha) \leq \frac{\kappa}{2}$. Therefore, there exist positive constants $\tilde{\Omega}_1(\kappa,x),\ \tilde{\omega}_1(\kappa,x)$ s.t. the solution of \eqref{eq:main problem} with risk level $\alpha = \tilde{\Omega}_1(\kappa,x) \exp{(-n \tilde{\omega}_1(\kappa,x))}$ satisfies Eq.~\eqref{eq:finite-sample-solution}.

    Suppose instead that Assumption \ref{ass: lipschitz cont}(b) holds with $p \geq 2$ and Lipschitz constant $L_2(z)$ with $\mathbb{E} [\| \nabla c(z,Y)\|^2] < +\infty$. Then following the proof of Lemma \ref{lemma: v inequality}, we have for any $z^*(x) \in S^*(x)$, 
    \begin{align*}
        \mathbb{P}\Bigl\{\hat{v}_n^{D^3RO}(x) > v^*(x) + (\mathbb{E}[\|\nabla c(z^*(x),Y)\|^2])^{1/2} \xi_n(\alpha,x) + 4L_2(z^*(x))\xi_n^2(\alpha,x)\Bigr\} \leq \alpha.
    \end{align*}
    Therefore, Eq.~\eqref{eq:proposition-5} is satisfied if we choose $\alpha \in (0,1)$ s.t.
    \begin{align}\label{eq: prop5 4b}
        \Bigl(\mathbb{E}[\|\nabla c(z^*(x),Y)\|^2]\Bigr)^{1/2} \xi_n(\alpha,x)
    + 4L_2(z^*(x))\xi_n^2(\alpha,x) \leq \kappa.
    \end{align}
    Using the similar analysis above, if we bound the smallest value of $\alpha$ using Assumption~\ref{ass:reg constant} and Eq.\ \eqref{eq:radius def}, then we can find positive constants $\tilde{\Omega}_2(\kappa,x),\ \tilde{\omega}_2(\kappa,x)$ that satisfy Eq.~\eqref{eq: prop5 4b}.
\end{proof}

\begin{theorem}[Finite sample solution guarantee]\label{thm:fssg}
 Suppose Assumptions \ref{ass:regression}, \ref{prop: concentration}, \ref{assum:lower-semicontinuous}, \ref{ass: lipschitz cont}, \ref{ass:risk}, and \ref{ass:reg constant} hold. Then for a.e.\ $x \in \mathcal{X}$ and any $\eta >0$, there exist positive constants $\Omega(\eta,x),\ \omega(\eta,x)$ such that the solution of \eqref{eq:main problem} under the ambiguity set $\hat{\mathcal{P}}_n(x,z;\xi_n(\alpha,x))$ defined in \eqref{eq:wass as} with the radius $\xi_n(\alpha,x)$ defined in \eqref{eq:radius def} and risk level defined as $\alpha ={\Omega}(\eta,x)\exp(-n{\omega}(\eta,x)) $ satisfies 
 \begin{align*}
     \mathbb{P}\{{\rm{dist}}(\hat{z}_n^{D^3RO}(x), S^*(x)) \geq \eta \} \leq 2\Omega(\eta,x)\exp{(-n \omega(\eta,x))}.
 \end{align*}

\end{theorem}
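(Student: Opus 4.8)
The plan is to deduce the finite sample solution guarantee from the finite sample value guarantee in Proposition~\ref{prop:oos-guarantee} via a standard ``gap'' (growth-condition) argument: being $\eta$-far from $S^*(x)$ forces a strictly positive suboptimality gap in the objective $g(\cdot,x)$, and an event of that form already has exponentially small probability by Proposition~\ref{prop:oos-guarantee}. Concretely, I would fix an $x \in \mathcal{X}$ for which Proposition~\ref{prop:oos-guarantee} applies, fix $\eta > 0$, and introduce the ``$\eta$-far'' feasible set $A_\eta(x) := \{z \in \mathcal{Z} : {\rm dist}(z, S^*(x)) \geq \eta\}$. If $A_\eta(x) = \emptyset$, then $\hat{z}_n^{D^3RO}(x) \in \mathcal{Z}$ forces ${\rm dist}(\hat{z}_n^{D^3RO}(x), S^*(x)) < \eta$ deterministically, so the claimed bound holds trivially for any positive $\Omega(\eta,x),\omega(\eta,x)$; hence I would assume $A_\eta(x) \neq \emptyset$.

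The crux is to show that the gap $\kappa(\eta,x) := \min_{z \in A_\eta(x)} g(z,x) - v^*(x)$ is strictly positive. Here I would argue that $A_\eta(x)$ is closed (being the preimage of $[\eta,\infty)$ under the $1$-Lipschitz map $z \mapsto {\rm dist}(z, S^*(x))$, with $S^*(x) \neq \emptyset$), hence compact as a closed subset of the compact $\mathcal{Z}$; that $g(\cdot,x)$ is lower semicontinuous on $\mathcal{Z}$ by Assumption~\ref{assum:lower-semicontinuous} together with the conditions discussed immediately after it, which carry lower semicontinuity of $c(\cdot,Y(x,\cdot))$ through to $g(\cdot,x)$; and therefore the minimum defining $\kappa(\eta,x)$ is attained at some $\bar{z} \in A_\eta(x)$. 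Since ${\rm dist}(\bar{z}, S^*(x)) \geq \eta > 0$ gives $\bar{z} \notin S^*(x)$, and $S^*(x)$ is precisely the set of minimizers of $g(\cdot,x)$ over $\mathcal{Z}$, we obtain $g(\bar{z},x) > v^*(x)$, i.e., $\kappa(\eta,x) > 0$.

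With $\kappa(\eta,x) > 0$ in hand, I would note that ${\rm dist}(\hat{z}_n^{D^3RO}(x), S^*(x)) \geq \eta$ implies $\hat{z}_n^{D^3RO}(x) \in A_\eta(x)$, hence $g(\hat{z}_n^{D^3RO}(x),x) \geq v^*(x) + \kappa(\eta,x)$. Applying Proposition~\ref{prop:oos-guarantee} with the constant $\kappa := \kappa(\eta,x)$, the associated constants $\tilde{\Omega}(\kappa(\eta,x),x)$ and $\tilde{\omega}(\kappa(\eta,x),x)$, and the risk level $\alpha = \tilde{\Omega}(\kappa(\eta,x),x)\exp(-n\tilde{\omega}(\kappa(\eta,x),x))$, I would obtain
\[
\mathbb{P}^n\{{\rm dist}(\hat{z}_n^{D^3RO}(x), S^*(x)) \geq \eta\} \leq \mathbb{P}^n\{g(\hat{z}_n^{D^3RO}(x),x) \geq v^*(x) + \kappa(\eta,x)\} \leq 2\alpha.
\]
Setting $\Omega(\eta,x) := \tilde{\Omega}(\kappa(\eta,x),x)$ and $\omega(\eta,x) := \tilde{\omega}(\kappa(\eta,x),x)$ then yields exactly the stated bound with exactly the stated risk level $\alpha = \Omega(\eta,x)\exp(-n\omega(\eta,x))$. (For the finitely many small $n$ with $\alpha \geq 1$, the bound $2\alpha \geq 1$ is vacuous, so no separate treatment is needed.)

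The main obstacle is the positivity of $\kappa(\eta,x)$, which relies on lower semicontinuity of $g(\cdot,x)$ over the compact $\mathcal{Z}$. In the decision-dependent setting this is more delicate than in the decision-independent case because $g(z,x) = \mathbb{E}_\epsilon[c(z, f^*(x,z)+\epsilon)]$ carries $z$ inside the second argument of $c$ as well, so one must invoke the additional regularity of $f^*(x,\cdot)$ (continuity, or lower semicontinuity paired with monotonicity of $c$ in its second argument) discussed after Assumption~\ref{assum:lower-semicontinuous}. Beyond that, the reasoning is the same compactness-plus-attainment reduction as in the decision-independent analysis \cite{kannan2020residuals}, since it only uses structural properties of $g(\cdot,x)$ and the already-established Proposition~\ref{prop:oos-guarantee}.
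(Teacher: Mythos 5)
Your proposal is correct and follows essentially the same route as the paper: reduce the event $\{{\rm dist}(\hat{z}_n^{D^3RO}(x),S^*(x))\ge\eta\}$ to the value-gap event $\{g(\hat{z}_n^{D^3RO}(x),x)\ge v^*(x)+\kappa(\eta,x)\}$ via a positive growth constant $\kappa(\eta,x)$ obtained from compactness of $\mathcal{Z}$ and lower semicontinuity of $g(\cdot,x)$, then invoke Proposition~\ref{prop:oos-guarantee} and set $\Omega(\eta,x)=\tilde{\Omega}(\kappa(\eta,x),x)$, $\omega(\eta,x)=\tilde{\omega}(\kappa(\eta,x),x)$. The only difference is cosmetic: the paper cites Lemma~10 of \cite{kannan2022data} for the existence of $\kappa(\eta,x)>0$, whereas you prove that gap lemma explicitly (and also handle the trivial edge cases), which is a fine, self-contained substitute.
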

\begin{proof}
According to Proposition \ref{prop:oos-guarantee}, we have for all $\kappa >0$, there exist constants $\tilde{\Omega}(\kappa,x)>0$ and $\tilde{\omega}(\kappa,x)>0$ such that the solution of \eqref{eq:main problem}  with $\alpha = \tilde{\Omega}(\kappa,x)\exp(-n\tilde{\omega}(\kappa,x))$ satisfies Eq.~\eqref{eq:finite-sample-solution}. Suppose for some $\eta>0$ and $x\in\mathcal{X}$, we have $\text{dist} (\hat{z}_n^{D^3RO}(x),S^*(x)) \geq \eta$ and sample path. By Assumption \ref{assum:lower-semicontinuous}, $g(\cdot, x)$ is lower-semicontinuous on $\mathcal{Z}$ for a.e.\ $x\in\mathcal{X}$. Since $\mathcal{Z}$ is compact, according to Lemma 10 in \cite{kannan2025data}, we have $g(\hat{z}_n^{D^3RO}(x),x) > v^*(x) + \kappa(\eta,x)$ for some $\kappa(\eta,x)>0$ on that path. Therefore, for a.e.\ $x \in \mathcal{X}$
\begin{align*}
    & \mathbb{P}\{\text{dist}(\hat{z}_n^{D^3RO}(x),S^*(x)) \geq \eta\} \\
    \leq & \mathbb{P}\{g(\hat{z}_n^{D^3RO}(x),x) > v^*(x) + \kappa(\eta,x)\} 
    \leq  2\tilde{\Omega}(\kappa(\eta,x),x)\exp(-n\tilde{\omega}(\kappa(\eta,x),x)),
\end{align*}
where the desired result holds for constants $\Omega(\eta,x)=\tilde{\Omega}(\kappa(\eta,x),x)$ and $\omega(\eta,x)=\tilde{\omega}(\kappa(\eta,x),x).$
\end{proof}

\section{Decomposition Algorithm}
\label{sec:decomp}

In this section, we propose a specialized Bender's decomposition algorithm with nonlinear cuts to solve the resulting ER-${\rm D^3}$RO problem. For computational tractability, we consider a general two-stage ER-$\rm{D^3RO}$ problem with 1-Wasserstein distance and $\ell_1$ reference norm of the following form:
\begin{align}\label{eq: general 2 stage erd3ro}
    \min_{z\in\mathcal{Z}} \quad & c_z^\top z + \sup_{Q \in \hat{\mathcal{P}}_n(x,z)}\mathbb{E}_{Y \sim Q}\bigl[H(z,Y(x,z))\bigr],
\end{align}
where $z\in\mathcal{Z}$ is the first-stage decision, $c_z$ is the first-stage cost vector, and the second-stage optimal value function $H(z,Y(x,z))$ is defined as
\begin{align}\label{eq:rhs uncertainty}
H(z,Y(x,z)):= \min_{\omega} \{q ^\top \omega:   W \omega\ge T(z)Y(x,z) + h(z)\}.
\end{align}
In the above, $\omega \in \mathbb{R}^{d_{\omega}}$ denotes the second-stage decision with cost vector $q\in \mathbb{R}^{d_{\omega}}$, $Y(x,z) \in \mathbb{R}^{d_y}$ denotes the uncertainty that depends on the first-stage decision $z$ and covariate $x$, and both $T(z) \in \mathbb{R}^{M\times d_y}$ and $h(z) \in \mathbb{R}^{M}$ are affine in $z$. Here, we assume $Y(x,z) \in \mathbb{R}^{d_y}$, and thus the projection step can be omitted in this section. We consider the case in which the decision-dependent uncertainty $Y(x,z)$ appears on the right-hand side of the constraints in \eqref{eq:rhs uncertainty}. The case with objective uncertainty can be equivalently recast as \eqref{eq:rhs uncertainty} using an epigraph reformulation. Many real-world applications with decision-dependent uncertainty share the common form presented in Problem~\eqref{eq: general 2 stage erd3ro}--\eqref{eq:rhs uncertainty}. For example, in pricing problems where the uncertain demand depends on the pricing decision, the objective and/or constraints typically include a revenue term as the product of the first-stage pricing decision and the uncertain demand \citep{liu2022coupled,bertsimas2020predictive}. 

To derive the exact reformulation and develop a decomposition algorithm for Problem~\eqref{eq: general 2 stage erd3ro}, we first make the following two assumptions.
\begin{assumption}[Relatively complete recourse]
\label{ass:recourse}
    Second-stage problem \eqref{eq:rhs uncertainty} is feasible under every feasible first-stage decision $z \in \mathcal{Z}$ and every realization of $Y$.
\end{assumption}

\begin{assumption}[Sufficiently expensive recourse]
\label{ass:recourse2}
    Dual of the second-stage problem \eqref{eq:rhs uncertainty} is feasible under every feasible first-stage decision $z \in \mathcal{Z}$ and every realization of $Y$.
\end{assumption}

Note that Assumptions \ref{ass:recourse} and \ref{ass:recourse2} ensure that the second-stage problem \eqref{eq:rhs uncertainty} is feasible and bounded, i.e., $|H(z,Y(x,z))|<+\infty$, under every feasible first-stage $z \in \mathcal{Z}$ and every realization of $Y$. Together with $\mathcal{Z}$ nonempty and compact, this constitutes a special case of the general condition assumed earlier, $\mathbb{E}_\epsilon[|c(z,f^*(x,z) + \epsilon)|] <+\infty$. 
Next, we introduce an exact reformulation in Section~\ref{sec:reformulation} based on duality theory and present a specialized Bender's decomposition to solve the resulting model with convergence analysis in Section~\ref{sec:algorithm}.

\subsection{Exact Reformulation}
\label{sec:reformulation}

To begin, we first reformulate Problem~\eqref{eq: general 2 stage erd3ro}--\eqref{eq:rhs uncertainty} by changing the expectation from one taken w.r.t.\ $Y(x,z) \in \mathbb{R}^{d_y}$ to one taken w.r.t.\ $\epsilon \in \Xi \subseteq\mathbb R^{d_y}$.
Given a covariate $x$ and a first-stage decision $z$, observe that $\hat{f}_n(x,z)$ becomes a constant and we can translate the randomness in $Y$ into the randomness in the noise $\epsilon$. Define the empirical residual distribution of the noise term as $\hat L_n^{ER}:=\frac1n\sum_{k=1}^n \delta_{\hat\epsilon_n^{k}}$ 
and the corresponding ambiguity set on the residuals as $
\mathcal L_n(x):=\bigl\{L\in\mathcal P(\mathbb{R}^{d_y}): d_{W,1}\bigl(L,\hat L_n^{ER}\bigr)\le \xi_n(\alpha,x)\bigr\}$.
By the translation invariance of the 1-Wasserstein distance, we rewrite \eqref{eq: general 2 stage erd3ro} as
\begin{align}\label{eq: 2 stage epsilon}
    \min_{z\in\mathcal{Z}} \quad & c_z^\top z + \sup_{L \in \mathcal{L}_n(x)}\mathbb{E}_{\epsilon \sim L}\bigl[H(z,\hat{f}_n(x,z) + \epsilon)\bigr].
\end{align}
To present the reformulation of \eqref{eq: 2 stage epsilon} with 1-Wasserstein distance using the $\ell_1$ norm, we follow the proof of Theorem 6 in\ \cite{hanasusanto2018conic} and apply the necessary notation changes. 

\begin{proposition}
Suppose Assumption\ \ref{ass:recourse2} holds. Under the 1-Wasserstein ambiguity set with $\ell_1$ norm, Problem\ \eqref{eq: 2 stage epsilon} can be reformulated as follows:
\allowdisplaybreaks
\begin{subequations}\label{eq:general two stage problem reformulation}
\begin{align}
\min_{z,\lambda,\phi,\psi,\omega} \quad & c_z^\top z + \lambda \xi_n(\alpha,x) + \frac{1}{n} \sum_{k=1}^n q^\top \omega_k \nonumber &  \\
    \textrm{s.t.} 
    \quad & q^\top \phi_m \leq \lambda,\ \ \ q^\top \psi_m \leq \lambda & \forall m \in [d_y], \label{eq:wass dual constr 1} \\
    \quad & T(z)e_m \leq W\phi_m,\ \ \ -T(z)e_m \leq W\psi_m & \forall m \in [d_y], \label{eq:wass dual constr 2} \\
    \quad & z\in\mathcal{Z},\ \lambda \geq 0,\ \ \ \phi_m, \psi_m \in \mathbb{R}^{d_\omega} & \forall m \in [d_y],\label{eq:wass dual constr 3}\\
    \quad & W\omega_k\ge T(z)\bigl(\hat{f}_n(x,z) + \hat{\epsilon}_n^k\bigr) + h(z) & \forall k \in [n], \label{eq:second stage constraint} \\
    \quad & \omega_k \in \mathbb{R}^{d_{\omega}} & \forall k\in[n]. \label{eq:wass dual constr 4}
\end{align}
\end{subequations}
\end{proposition}
Note that the regression estimator $\hat f_n(x,z)$ in \eqref{eq:second stage constraint} can be constructed using a variety of regression methods, including parametric approaches (e.g., OLS and Lasso) and nonparametric approaches (e.g., kernel regression, CART, and neural networks). This makes Problem \eqref{eq:general two stage problem reformulation} highly nonconvex and computationally intractable to solve directly. Motivated by this challenge, we next develop a decomposition algorithm to solve Problem \eqref{eq:general two stage problem reformulation} more efficiently. We will test our decomposition algorithm with different parametric and nonparametric regression models $\hat f_n(x,z)$ in Section \ref{sec:experiments}.

\subsection{Decomposition Algorithm and Convergence Analysis}
\label{sec:algorithm}

In this section, we devise a specialized Bender's decomposition algorithm to solve Problem\ \eqref{eq:general two stage problem reformulation}. We treat $z,\lambda,\phi,\psi$ as first-stage decision variables subject to constraints \eqref{eq:wass dual constr 1}--\eqref{eq:wass dual constr 3}, and $\omega_k$ as second-stage decision variables subject to the second-stage constraints \eqref{eq:second stage constraint} and \eqref{eq:wass dual constr 4}. Then, using the second-stage optimal value function defined in \eqref{eq:rhs uncertainty} and an epigraph reformulation, we equivalently recast Problem \eqref{eq:general two stage problem reformulation}   as\vspace*{-0.05in} 
\begin{align} 
    \min_{z, \lambda, \phi, \psi, \mathbf{\Theta}} \quad & c_z^\top z + \lambda \xi_n(\alpha,x) + \frac{1}{n} \sum_{k=1}^n\Theta_k \label{eq:full benders problem} \\
    \textrm{s.t.}\quad & \text{Constraints}\ \eqref{eq:wass dual constr 1}-\eqref{eq:wass dual constr 3}, \nonumber \\
    \quad & \Theta_k \ge H(z,Y^k(x,z))\ \forall k\in[n], \nonumber
\end{align}
where $\mathbf{\Theta}:=\{\Theta_k\}_{k=1}^n$ and $Y^k(x,z):=\hat{f}_n(x,z) + \hat{\epsilon}_n^k$.

We start by decomposing Problem \eqref{eq:full benders problem} into a master problem and a set of subproblems. Suppose at iteration $t$, we have a first-stage solution $z^t$, which has been found by solving the master problem.  For each $k\in[n]$, we construct the following subproblem at $z^t$:
\begin{align} 
    \Omega_k(z^t) : = \max_{\pi \ge 0} \Bigl\{\pi^\top \Bigl(T(z^t) \bigl(\hat{f}_n(x,z^t) + \hat{\epsilon}_n^k\bigr) + h(z^t)\Bigr) :\ W^\top \pi = q\Bigr\}, 
    \tag{\textbf{SP}\textsuperscript{\it k}} 
    \label{eq:dual of second stage}
\end{align}
where we denote $\Pi = \{\pi \in \mathbb{R}_+^M: W^\top \pi = q\}$ as the feasible region of the subproblem. Note that \eqref{eq:dual of second stage} is the dual problem of $H(z^t,Y^k(x,z^t))$ defined in \eqref{eq:rhs uncertainty} with $Y^k(x,z^t)=\hat{f}_n(x,z^t) + \hat{\epsilon}_n^k$. Given the empirical residual $\hat{\epsilon}_n^k$, covariate $x$, and first-stage solution $z^t$, the term $\hat{f}_n(x,z^t) + \hat{\epsilon}_n^k$ becomes constant in the subproblem. We denote an optimal solution of \eqref{eq:dual of second stage} as $\pi^{k,t}$. These optimal dual solutions are then used to construct the following cuts
\begin{align}\label{eq:benders multi-cut}
\Theta_k \ \ge\ (\pi^{k,t})^\top\Bigl(T(z)(\hat{f}_n(x,z) + \hat{\epsilon}_n^k)+h(z)\Bigr)\ \ \ \forall k\in[n],
\end{align}
which are added to the master problem. The master problem at iteration $t$ is
\begin{align} 
 %\mbox{{\bf (MP)}}\quad
        \min_{z, \lambda, \phi, \psi, \mathbf{\Theta}} \quad & c_z^\top z + \lambda \xi_n(\alpha,x) + \frac{1}{n} \sum_{k=1}^n\Theta_k \tag{{\bf MP}} \label{eq:master prob} \\
    \textrm{s.t.}\quad & \text{Constraints}\ \eqref{eq:wass dual constr 1}\ {-}\ \eqref{eq:wass dual constr 3}, \nonumber \\
    \quad & \Theta_k \ge (\pi^{k,\tau})^\top \Bigl(T(z) \bigl(\hat{f}_n(x,z) + \hat{\epsilon}_n^k\bigr) + h(z)\Bigr)\ \ \forall k\in[n],\ \tau \in [t-1], \nonumber
\end{align}
where the variables $\Theta_k$, along with the cuts present in \eqref{eq:master prob}, provide a lower approximation of the function $H(z,Y^k(x,z))$. Unlike the subproblems, for the master problem, while the empirical residual $\hat{\epsilon}_n^k$ is fixed, $\hat{f}_n(x,z)$ is no longer a constant. It is the learning model that is represented as a function of the decision variables $z$ in \eqref{eq:master prob}. Above, we present a multi-cut formulation of Bender's decomposition algorithm. Alternatively, one can use a single-cut formulation, in which the scenario-wise variables $\{\Theta_k\}_{k=1}^n$ are replaced by a single variable $\Theta$, and the corresponding cut is obtained by averaging the scenario-wise cuts over all scenarios.

Algorithm\ \ref{algo:erd3ro-benders} summarizes our specialized Bender's decomposition. It starts from the master problem \eqref{eq:master prob} with an empty cut set (or, with known bounds and cuts) by setting $t=1$. 
At each iteration $t$, we first solve the master problem \eqref{eq:master prob} and use its optimal objective value to obtain a lower bound. Then the optimal solution $z^t$ is passed to the subproblem \eqref{eq:dual of second stage} to generate one specialized Bender's cut \eqref{eq:benders multi-cut} for each scenario $k=1,\ldots,n$. These cuts are added to refine the master problem \eqref{eq:master prob}. The current incumbent solution can also be used to construct an upper bound $c_z^\top z^t + \lambda^t \xi_n(\alpha,x) + \frac{1}{n} \sum_{k=1}^n\Omega_k(z^t)$. The algorithm terminates when the current master problem's optimal solution satisfies the newly generated cuts.
% , i.e., when the master problem's recourse approximation is tight at the incumbent. 
Alternatively, one can also terminate the algorithm when the gap between the upper and lower bounds is within a pre-defined threshold.
\begin{algorithm}[ht!]
\caption{Specialized Bender's algorithm for solving ER-$\rm{D^3RO}$ \eqref{eq:general two stage problem reformulation}}
\label{algo:erd3ro-benders}
\small
\begin{algorithmic}[1]
\STATE \textbf{Initialize:} $\mathrm{LB}\gets -\infty$, $\mathrm{UB}\gets +\infty$, $t\gets 1$.
\WHILE{$\mathrm{UB}-\mathrm{LB}>\varepsilon$}

    \STATE \textbf{Solve\ \eqref{eq:master prob}:} Obtain the optimal solution $(z^{t},\lambda^{t},\phi^t,\psi^t,\{\Theta_k^{t}\}_{k=1}^n)$.
    \STATE \textbf{Update LB}: $\mathrm{LB}\gets$ optimal value of MP.

    \STATE \textbf{Solve\ \eqref{eq:dual of second stage} and obtain cut coefficients:}
    \FOR{$k=1$ \TO $n$}
        \STATE Evaluate $Y^k\bigl(x,z^{t}\bigr)\gets \hat f_n\bigl(x,z^{t}\bigr)+\hat\epsilon_n^{\,k}$ using the optimal solution $z^t$ from the\ \eqref{eq:master prob}.
        \STATE Solve\ \eqref{eq:dual of second stage} and obtain an optimal solution $\pi^{k,t}$.
        \begin{comment}
        \[
        \pi^{k,t}\in\arg\max_{\pi\ge 0}
        \Bigl\{\pi^\top\bigl(T(z^{t})\,Y^k(x,z^{t})+h(z^{t})\bigr):\ W^\top\pi=q\Bigr\}.
        \]
        \end{comment}
    \ENDFOR

    \STATE \textbf{Update UB:} \\
    $   \mathrm{UB}\gets \min\left\{\mathrm{UB},\;
    c_z^\top z^{t} + \lambda^t \xi_n(\alpha,x)+\frac{1}{n}\sum_{k=1}^n (\pi^{k,t})^\top\bigl(T(z^{t})\,Y^k(x,z^{t})+h(z^{t})\bigr)
    \right\}.$
\IF{$\Theta_k^t \ge (\pi^{k,t})^\top\bigl(T(z^t)\bigl(\hat f_n(x,z^t)+\hat\epsilon_n^{\,k}\bigr)+h(z^t)\bigr)\ \forall k\in[n]$}
    \STATE \textbf{return} $(z^{t}, \lambda^{t},\phi^t,\psi^t, \{\Theta_k^{t}\}_{k=1}^n)$.
\ELSE
    \STATE Add the cuts $\Theta_k \;\ge\; (\pi^{k,t})^\top\bigl(T(z)\bigl(\hat f_n(x,z)+\hat\epsilon_n^{\,k}\bigr)+h(z)\bigr)\ \forall k\in[n]$ to\ \eqref{eq:master prob}.
\ENDIF
    % \STATE {\color{red}$\mathcal K \gets \mathcal K \cup \{\text{cut}_k^t,\ k\in[n]\}$.}

    \STATE $t\gets t+1$.
\ENDWHILE
\STATE \textbf{return} $(z^{t}, \lambda^{t},\phi^t,\psi^t, \{\Theta_k^{t}\}_{k=1}^n)$.
\end{algorithmic}
\normalsize
\end{algorithm}

Note that the cuts in \eqref{eq:benders multi-cut} are different from the traditional linear Bender's cuts, in which each iteration produces an affine inequality in the first-stage variable $z$. In contrast, the cuts in \eqref{eq:benders multi-cut} are generally nonconvex in the first-stage decision $z$ due to the term $T(z)(\hat{f}_n(x,z) + \hat{\epsilon}_n^k)$. As a result, incorporating these cuts makes the master problem \eqref{eq:master prob} highly nonconvex and may introduce additional computational challenges. Even so, we can still establish the finite convergence of our specialized Bender's algorithm. We first show that the recourse function $H(z,Y^k(x,z))$ is a piecewise nonlinear function in $z$ with a finite number of pieces in the next lemma.

\begin{lemma}\label{lemma:conv for nonlinear}
    Suppose Assumptions \ref{ass:recourse} and \ref{ass:recourse2} hold. The second-stage optimal value function $H(z,Y^k(x,z))$ is a piecewise nonlinear function in $z$ with a finite number of pieces for each $k\in[n]$.
\end{lemma}
\begin{proof}
Under both Assumptions \ref{ass:recourse} and \ref{ass:recourse2}, the second-stage recourse problem \eqref{eq:rhs uncertainty} is a feasible and bounded linear program given any first-stage decision $z$. Therefore, strong duality holds, i.e., $H(z,Y^k(x,z))=\Omega_k(z)$. The dual problem \eqref{eq:dual of second stage} is a maximization problem over the polyhedral feasible region $\Pi$. Since $\Pi$ does not depend on $z$ and has finitely many extreme points, $\Omega_k(z)$ can be expressed as the maximum of a finite collection of nonlinear functions in $z$, each associated with an extreme point of $\Pi$. Therefore, $\Omega_k(z)$ is a piecewise nonlinear function of $z$ with finitely many pieces.
\end{proof}

\begin{theorem}
Suppose Assumptions \ref{ass:recourse} and \ref{ass:recourse2} hold.  Assume there is a solution oracle that solves the master problem\ \eqref{eq:master prob} to optimality, and that the subproblems\ \eqref{eq:dual of second stage} are solved using a method that returns extreme points in finitely many iterations. Then the Bender's algorithm (Algorithm \ref{algo:erd3ro-benders}) converges to an optimal solution of Problem\ \eqref{eq:general two stage problem reformulation} in finitely many iterations.
\end{theorem}

\begin{proof}
Suppose $(z^*,\lambda^*,\phi^*,\psi^*,\{\Theta_k^{*}\}_{k=1}^n)$ is an optimal solution of Problem\ \eqref{eq:full benders problem}. Let $(z^{t},\lambda^{t},\phi^t,\psi^t,\{\Theta_k^{t}\}_{k=1}^n)$ denote an optimal solution of the master problem\ \eqref{eq:master prob} at iteration $t$. For each $k\in[n]$, denote $\pi^{k,t}$ to be an optimal solution of the subproblem\ \eqref{eq:dual of second stage} evaluated at $z=z^{t}$. 
Since each $\pi^{k,\tau}$ is a feasible solution of the dual problem\ \eqref{eq:dual of second stage} for any solution $z$, i.e., $\pi^{k,\tau}\in \Pi,\ \forall\tau\in[t]$, we have $\Theta_k^* \ge H(z^*,Y^k(x,z^*))=\max_{\pi\in\Pi} \pi^\top \bigl(T(z^*) \bigl(\hat{f}_n(x,z^*) + \hat{\epsilon}_n^k\bigr) + h(z^*)\bigr) \ge (\pi^{k,\tau})^\top \bigl(T(z^*) \bigl(\hat{f}_n(x,z^*) + \hat{\epsilon}_n^k\bigr) + h(z^*)\bigr)$ for all $k\in[n]$ and $\tau\in[t-1]$.
Therefore, $(z^*,\lambda^*,\phi^*,\psi^*,\Theta_k^*)$ is feasible for the current master problem \eqref{eq:master prob}, i.e., 
\begin{align}\label{eq:one-side}
c^\top z^*+\lambda^* \xi_n(\alpha,x) +\frac{1}{n}\sum_{k=1}^n\Theta_k^* \;\ge\; c^\top z^{t}+\lambda^{t} \xi_n(\alpha,x) +\frac{1}{n}\sum_{k=1}^n\Theta_k^{t}.
\end{align}

Thus, the master problem's optimal objective function value always provides a lower bound. We next suppose that the current master problem's optimal solution satisfies the newly generated cuts, i.e.,
\begin{align*}
\Theta_k^{t}\;\ge\;
(\pi^{k,t})^\top
\Big(
T(z^{t})(\hat f_n(x,z^{t})+\hat{\epsilon}_n^k)+h(z^{t})
\Big)
\;{=}\;
\Omega_k(z^{t})=H(z^t,Y^k(x,z^t))\ \forall k\in[n].
\end{align*}
Therefore, $(z^t,\lambda^t,\phi^t,\psi^t,\Theta_k^t)$ is feasible for Problem\ \eqref{eq:full benders problem}, i.e., 
\begin{align}\label{eq:other-side}
c^\top z^{t}+\lambda^{t} \xi_n(\alpha,x) +\frac{1}{n}\sum_{k=1}^n\Theta_k^{t} \;\ge\; c^\top z^*+\lambda^* \xi_n(\alpha,x) +\frac{1}{n}\sum_{k=1}^n\Theta_k^*.
\end{align}
Combining Eqs. \eqref{eq:one-side} and \eqref{eq:other-side}, we obtain $c^\top z^*+\lambda^* \xi_n(\alpha,x) +\frac{1}{n}\sum_{k=1}^n\Theta_k^* \;=\; c^\top z^{t}+\lambda^{t} \xi_n(\alpha,x) +\frac{1}{n}\sum_{k=1}^n\Theta_k^{t}$,
which implies that $(z^t,\lambda^t,\phi^t,\psi^t,\{\Theta_k^t\}_{k=1}^n)$ is an optimal solution of Problem\ \eqref{eq:full benders problem}. If instead the newly generated cut is violated at $(z^t,\lambda^t,\phi^t,\psi^t,\{\Theta_k^t\}_{k=1}^n)$, then after adding this cut, the point $(z^t,\lambda^t,\phi^t,\psi^t,\{\Theta_k^t\}_{k=1}^n)$ becomes infeasible for all subsequent master problems. As a result, the algorithm can never revisit the same master solution. 

Therefore, at each iteration, the algorithm either adds a new Bender's cut for each recourse function $H(z,Y^k(x,z))$ or terminates if the current solution satisfies the newly generated cuts. Since only finitely many distinct Bender's cuts can be generated by Lemma \ref{lemma:conv for nonlinear} and the same master solution cannot be revisited, the algorithm terminates finitely, and the returned solution is optimal for Problem\ \eqref{eq:general two stage problem reformulation}.
\end{proof}

\section{Computational Results}\label{sec:experiments}
In this section, we present the computational results of the specialized Bender's algorithm and compare the results across different regression models and optimization formulations under the 1-Wasserstein distance.
\subsection{Shipment Planning and Pricing Problem}
For our experiments, we consider a two-stage shipment planning and pricing problem, where we have $|\mathcal{I}|$ warehouses to satisfy the demand of a product at $|\mathcal{J}|$ customer sites, similar to the model in \cite{bertsimas2020predictive}. In the first stage, we aim to determine the price $z_1\in\mathbb{R}_+$ of this product and the production amounts $z_2\in\mathbb{R}_+^{|\mathcal{I}|}$ at $|\mathcal{I}|$ warehouses, with a unit production cost of $p_1\ge 0$. Then in the second stage, demand $Y_j\in\mathbb{R}_+$ is realized at each customer site $j\in\mathcal{J}$ and we must ship from the warehouses to satisfy all the demand. We ship $s_{ij}\ge 0$ units from warehouse $i$ to customer site $j$ at a unit transportation cost of $c_{ij}\ge 0$. We also have an option of last-minute production $t_i$ at warehouse $i$ with a higher unit production cost $p_2>p_1$. Specifically, we focus on the following model:\vspace*{-0.05in}
\begin{align}\label{eq:cvar two stage}
    \min_{z_1\in\mathbb{R}_+,z_2 \in\mathbb{R}_+^{|\mathcal{I}|}} p_1\sum_{i\in\mathcal{I}}z_{2,i}+ \mathbb{E}_Y [H(z,Y(x,z_1))]+ \rho \text{CVaR}_\theta (H(z,Y(x,z_1))),
\end{align}
where we aim to minimize the first-stage production cost and a combination of expectation and Conditional Value-at-Risk (CVaR) of the second-stage cost $H(z,Y(x,z_1))$. Here, the CVaR term can be reformulated as an expectation by the addition of an auxiliary decision variable \cite{rockafellar2000optimization}, resulting in a form considered in this paper. The parameter $\rho\ge 0$ is the weight of the CVaR term, and $\theta\in(0,1)$ specifies the risk aversion level, with $\text{CVaR}_\theta$ roughly averaging over the $100(1-\theta)\%$ worst-case outcomes.
Here, we assume that the random customer demand $Y(x,z_1)$ depends on some covariate information $x$ and our first-stage pricing decision $z_1$. Given a realization $Y^k(x,z_1)$, the second-stage problem is defined as follows:
\begin{subequations}\label{model:pricing}
\begin{align}
    H(z,Y^k(x,z_1)):=\min_{s_{ijk},t_{ik}\in \mathbb{R}_+} \quad & p_2\sum_{i\in\mathcal{I}}t_{ik}+\sum_{i \in \mathcal{I}} \sum_{j \in \mathcal{J}} c_{ij} s_{ijk} - z_1\sum_{j \in \mathcal{J}} Y^k_{j}(x,z_1)\\
    \textrm{s.t} \quad & \sum_{i \in \mathcal{I}} s_{ijk} \ge Y^k_{j}(x,z_1)\ \forall j \in \mathcal{J},\\
    \quad & \sum_{j \in \mathcal{J}} s_{ijk} \leq z_{2,i}+t_{ik}\ \forall i \in \mathcal{I},\vspace*{-0.05in}
\end{align}
\end{subequations}
where we aim to minimize the total production and transportation cost minus the revenue, subject to demand satisfaction and capacity constraints.

We randomly sample $|\mathcal{I}|=2$ warehouses and $|\mathcal{J}|=3$ customer sites on a $100\times 100$ grid. The unit transportation cost $c_{ij}$ is calculated based on the Euclidean distance between facility $i$ and customer site $j$, and the production costs are $p_1=5$ when done in advance and $p_2=100$ for last-minute production. Following the economics literature on the pricing-demand model (see \cite{petruzzi1999pricing,chen2004coordinating,chen2007risk}), we assume the ground truth demand model to be\vspace*{-0.05in} 
\begin{align}
    %f^*_j(x,z_1)
    Y_j= \sum_{l\in\mathcal{L}}A_{jl}X_l^2-B_jz_1+\alpha_j+\epsilon_j\ \forall j \in \mathcal{J},\label{eq:ground-truth} \vspace*{-0.05in}
\end{align}
where we consider $|\mathcal{L}|=3$ demand-predictive covariates $X$, $z_1$ is our price, and $\epsilon_j\sim\mathcal{N}(0,1)$ is the zero-mean additive error term.  The coefficients $A_{jl}$ and $B_j$ are sampled uniformly between 3 and 5 for each $j\in\mathcal{J}$ and $l\in\mathcal{L}$, and the coefficient $\alpha_j$ is uniformly sampled between $1000$ and $2000$ for each $j\in\mathcal{J}$. Once these coefficients are fixed, we generate different replications of dataset $D_n:=\{(x^k,z^k,y^k)\}_{k=1}^n$ by sampling i.i.d. covariates $x^k$ from a Gamma distribution with shape parameter $k=2$ and scale parameter $\theta=3$ and i.i.d.\ price $z^k$ from a Beta distribution with parameters $\alpha =2$ and $\beta = 5$ (we then scale $z^k$ to be between 0 and 500). The error term $\epsilon_j^k$ is sampled from a Normal distribution with mean of 0 and standard deviation of 1. Then we obtain $y^k$ according to Eq.~\eqref{eq:ground-truth}.

\subsection{Experimental Setup}
Given a dataset $D_n=\{(x^k,z^k,y^k)\}_{k=1}^n$, we estimate the prediction function $\hat f_n(x,z_1)$ using three regression approaches: (i) parametric linear regression via OLS, (ii) nonparametric Nadaraya-Watson kernel regression using the uniform kernel, and (iii) nonparametric Rectified Linear Unit (ReLU) neural networks. Under the OLS approach, the estimated model is given by
\begin{align}\label{eq:estimate-model}
    \hat{f}_{n,j}(x,z_1) = \sum_{l \in \mathcal{L}} \hat{A}_{jl}x_l + \hat{B}_j z_1 + \hat{\alpha}_j\ \forall j \in \mathcal{J}. 
\end{align}
In this case, we have a misspecified model because $f^*\not\in\mathcal{F}$.
For the kernel regression, we take a weighted average of nearby data points, where the weight is determined by a uniform kernel function with a bandwidth parameter $h=8$. The resulting estimated model is given by
\begin{align}\label{eq:kernel regression}
\hat{f}_{n,j}(x,z_1) =
 \frac{\sum_{k=1}^{n} \mathbf{1}\left\{\lVert x-x^{k}\rVert^{2}+(z_1-z^{k})^{2}\le h^{2}\right\} \, y_j^{k}}
{\sum_{k=1}^{n} \mathbf{1}\left\{\lVert x-x^{k}\rVert^{2}+(z_1-z^{k})^{2}\le h^{2}\right\}}
\ \forall j \in \mathcal J.
\end{align}
To represent this kernel regression model in the downstream optimization, we further introduce binary variables and use the big-M technique to reformulate the indicator functions. 
We also adopt a neural network predictor with one hidden layer of 16 neurons and ReLU activation, given by
\begin{align}
&\boldsymbol{h}^{(0)} = [x,z_{1}],\ \boldsymbol{h}^{(1)} = \max\{\hat{\boldsymbol{v}}^{(1)} \boldsymbol{h}^{(0)} + \hat{\boldsymbol{\alpha}}^{(1)},0\} \nonumber\\
    &\hat{f}_{n,j}(x,z_1) = \hat{\boldsymbol{v}}^{(2)} \boldsymbol{h}^{(1)} + \hat{\boldsymbol{\alpha}}^{(2)}\ \  \forall j \in \mathcal{J}\label{eq:NN}
\end{align}
where $\hat{\boldsymbol{v}}^{(l)}$ and $\hat{\boldsymbol{\alpha}}^{(l)}$ for $l=1,2$ are the estimated weights and biases in the neural network.
Note that ReLU neural networks are mixed-integer representable \cite{anderson2020strong} and we directly embed the trained neural networks in our downstream optimization model using the Gurobi Machine Learning package \cite{GurobiML2026}. 

At a given new covariate $x$, the fitted regression models \eqref{eq:estimate-model}--\eqref{eq:NN} are then used to construct empirical residuals and ER-${\rm D^3}$RO models for computing in-sample solutions. Given an optimal in-sample solution $z_1^*$ and the new covariate $x$, we use the ground truth model \eqref{eq:ground-truth} to generate 1000 scenarios to evaluate the out-of-sample cost of the solution $z_1^*$. We generate 5 replications of datasets $D_n:=\{(x^k,z^k,y^k)\}_{k=1}^n$ and report the average of out-of-sample costs over these 5 independent runs.

Numerical tests are conducted on the Ohio Supercomputer Center, on a Linux system running Red Hat Enterprise Linux 9.4 and equipped with an Intel(R) Xeon(R) Gold 6148 CPU @ 2.40GHz. Since Model \eqref{eq:cvar two stage} and its DRO reformulations involve non-convex terms, we use Gurobi 12.0.0 coded in Python 3.11.0 for solving all non-convex programming models (with NonConvex parameter set to 2), with a computational time limit set to three hours.

\subsection{Numerical Results of Specialized Bender's Algorithm}
We compare the computational performance of our specialized Bender's algorithm with that of solving the problem directly using Gurobi. Since the multi-cut formulation consistently outperforms the single-cut formulation in our problem setting, we report the results of the multi-cut Bender's algorithm. Table\ \ref{tab:benders comparison} summarizes the in-sample cost (cost), optimality gap (gap), and runtime (time) of the two approaches across three different regression approaches: OLS (O), uniform kernel regression (K), and ReLU neural network (N) under different sample sizes.  For these experiments, if a method does not finish within the time limit (3 hours), we report its best obtained in-sample cost and the optimality gap at the end of the time limit. 
\begin{table}[ht!]
\centering
\caption{Performance comparison between the Bender's algorithm and the direct solution by Gurobi}
\label{tab:benders comparison}
\scriptsize
\setlength{\tabcolsep}{4pt}
\renewcommand{\arraystretch}{1.1}
\resizebox{\textwidth}{!}{
\begin{tabular}{c|c|cc|cc|cc|cc}
\hline
& & \multicolumn{2}{c|}{$n=1000$} & \multicolumn{2}{c|}{$n=2000$} & \multicolumn{2}{c|}{$n=3000$} &
\multicolumn{2}{c}{$n=4000$}\\
& & Gurobi & Bender's& Gurobi & Bender's & Gurobi & Bender's & Gurobi & Bender's \\
\hline
& Cost & \bf{-186292} & -186280 & -129456 & \bf{-129457} & -187615 & \bf{-215392} &  -143694 & \bf{-270851}\\
O & Gap & \bf{0.003\%} & 0.03011\% & \bf{0.00242\%} & 0.00597\% & 219.94\% & \textbf{0.01\%} & 607.63\% & 7.20\%\\
& Time & \textbf{2106} & 6450 & 6730 & \textbf{3616} & 10800 & \textbf{5488} & 10800 & \textbf{10647}\\
\hline
& & \multicolumn{2}{c|}{$n=100$} & \multicolumn{2}{c|}{$n=200$} & \multicolumn{2}{c|}{$n=300$} &
\multicolumn{2}{c}{$n=400$}\\
& & Gurobi & Bender's & Gurobi & Bender's & Gurobi & Bender's & Gurobi & Bender's \\
\hline
& Cost & \bf{-3367905} & -3367893 
& \bf{-3053800} & -3053705 
& \bf{-3478859} & -3478841 &\bf{-3244664} &-3244652 \\
K & Gap & inf & \textbf{0.0034\%}
& 0\% & 0\%
& inf & \textbf{0.0006\%} & inf & 0.0008\%\\
& Time & 2181 & \textbf{35}
& 954 & \textbf{451}
& 4574 & \textbf{1806} & 7077 & \bf{4684}
\\
& \# of inf gap & 1/5 & \textbf{0/5}
& 0/5 & 0/5
& 1/5 & \textbf{0/5} & 3/5 & \bf{0/5}
\\
\hline
& Cost & -2371330 & \textbf{-2371342} 
& 5752162 & \textbf{-2111285} 
& 727948 & \textbf{-2612521} &1219342&\textbf{-2105438} \\
 & Gap & \textbf{0.003\%} & 0.004\%
& inf& \textbf{0.004\%}
& inf & \textbf{0.006\%} & inf & \textbf{0.0029\%}\\
N& Time & 206 & \textbf{38}
& 2608 & \textbf{127}
& 7080 & \textbf{75}& 8483 & \textbf{116}\\
& \# of inf gap & 0/5 & 0/5
& 1/5 & \textbf{0/5}
& 3/5 & \textbf{0/5} & 2/5 & \textbf{0/5}\\
& Unbounded & 1/5 & 1/5
& 0/5 & 0/5
& 4/5 & 4/5 & 4/5 & 4/5\\
\hline
\end{tabular}
}
\end{table}

Table\ \ref{tab:benders comparison} shows that our Bender's algorithm scales much better than the Gurobi solver in two ways.
First, it achieves shorter runtime in almost all instances, with the only exception occurring in the OLS case when $n=1000$. Second, our algorithm yields significantly smaller optimality gaps compared to the direct solution by Gurobi. In particular, under the kernel and neural network regression, Gurobi frequently returns infinite optimality gaps (reported as ``\# of inf gap'' in Table\ \ref{tab:benders comparison}) across different sample sizes as it cannot obtain a valid lower bound within the time limit, whereas our algorithm consistently achieves small optimality gaps. Notice that, in some problem instances, neural network predictions fail to capture the relationship between demand and the pricing decision accurately. As a result, the model may drive the price to infinity while still predicting positive demand, which leads to an unbounded optimization problem (reported as ``Unbounded'' in Table\ \ref{tab:benders comparison}). For the bounded instances, Gurobi still fails to produce a valid lower bound in most cases, whereas our algorithm can find a near-optimal solution within a few hundred seconds. These results show a clear computational advantage and stronger numerical stability of our proposed algorithm across regression approaches and sample sizes.

\vspace*{-0.01in}
\subsection{Comparison of Different Models and Methods}
We compare the proposed model \eqref{eq:main problem} with its non-DRO (but decision-dependent) counterpart \eqref{eq: ersaa}, and its decision-independent (but DRO) counterpart denoted by ER-DRO. 
For the decision-independent counterpart, we fit a regression model $\hat{f}_n(x)$ without the $z$ variable. To have a fair comparison, in this section, we use sample sizes $n=100,\ldots,400$ for all regression models, including OLS. Also, for each sample size and each replication within that sample size, all three regression models see the same observations. For both ER-${\rm D^3}$RO and ER-DRO models, we use leave-one-out cross-validation to select the best radius $\xi$, which is chosen from the candidate set $\{1,10,50,100\}$. We apply Bender's algorithm to solve all three models and report the average out-of-sample cost over five independent runs in Figure\ \ref{fig:pricing-decision-dependency}.
\begin{figure}[ht!]
    \centering

    % black-and-white unified styles
\pgfplotsset{
    erdrowok/.style={black, solid, mark=*},
    erddsaao/.style={black, dashed, mark=square*},
    erddsaak/.style={black, dashed, mark=triangle*},
    erdthreeo/.style={black, solid, mark=diamond*},
    erdthreek/.style={black, solid, mark=o}
}
    \begin{minipage}[t]{0.32\textwidth}
        \centering
        \resizebox{\textwidth}{!}{%
        \begin{tikzpicture}
          \begin{axis}[
            xlabel={Sample Size $n$},
            ylabel={Average Out-of-Sample Cost},
            xtick={100,200,300,400},
            legend to name=commonlegend,
            legend columns=3,
            legend style={
                font=\scriptsize,
                text=black,
                draw=none,
                /tikz/every even column/.append style={column sep=0.2cm}
            }
          ]
            \addplot[erdthreeo] coordinates {
              (100,-1056016.909)
              (200,-1144750.66)
              (300,-1533715.495)
              (400,-1397359.333)
            };
            \addplot[erdthreek] coordinates {
              (100,-2538215.099)
              (200,-2320965.481)
              (300,-2708484.258)
              (400,-2458363.703)
            };
            \addplot[erdrowok] coordinates {
              (100,0)
              (200,0)
              (300,0)
              (400,0)
            };
            \addplot[erddsaao] coordinates {
              (100,-920676.1494)
              (200,-996716.6597)
              (300,-1418437.984)
              (400,-1044973.463)
            };
            \addplot[erddsaak] coordinates {
              (100,-2538054.822)
              (200,-2217397.424)
              (300,-2686444.389)
              (400,-2376120.377)
            };
            \legend{
                ER-${\rm D^3}$RO-O,
                ER-${\rm D^3}$RO-K,
                ER-DRO-O/K,
                ER-DD-SAA-O,
                ER-DD-SAA-K
            }
          \end{axis}
        \end{tikzpicture}%
        }

        \vspace{1mm}
        {\footnotesize (a) Different regression models}
    \end{minipage}
    \hfill
    \begin{minipage}[t]{0.32\textwidth}
        \centering
        \resizebox{\textwidth}{!}{%
        \begin{tikzpicture}
          \begin{axis}[
            xlabel={Sample Size $n$},
            ylabel={Average Out-of-Sample Cost},
            xtick={100,200,300,400},
            scaled y ticks=true
          ]
            \addplot[erddsaao] coordinates {
              (100,-920676.1494)
              (200,-996716.6597)
              (300,-1418437.984)
              (400,-1044973.463)
            };
            \addplot[erdthreeo] coordinates {
              (100,-1056016.909)
              (200,-1144750.66)
              (300,-1533715.495)
              (400,-1397359.333)
            };
          \end{axis}
        \end{tikzpicture}%
        }

        \vspace{1mm}
        {\footnotesize (b) OLS regression}
    \end{minipage}
    \hfill
    \begin{minipage}[t]{0.32\textwidth}
        \centering
        \resizebox{\textwidth}{!}{%
        \begin{tikzpicture}
          \begin{axis}[
            xlabel={Sample Size $n$},
            ylabel={Average Out-of-Sample Cost},
            xtick={100,200,300,400},
            scaled y ticks=true
          ]
            \addplot[erddsaak] coordinates {
              (100,-2538054.822)
              (200,-2217397.424)
              (300,-2686444.389)
              (400,-2376120.377)
            };
            \addplot[erdthreek] coordinates {
              (100,-2538215.099)
              (200,-2320965.481)
              (300,-2708484.258)
              (400,-2458363.703)
            };
          \end{axis}
        \end{tikzpicture}%
        }

        \vspace{1mm}
        {\footnotesize (c) Kernel regression}
    \end{minipage}

    \vspace{1mm}
    \centering
    \ref{commonlegend}
\vspace*{-0.13in}
    \caption{Out-of-sample cost comparison between ER-${\rm D^3}$RO, ER-DRO, and ER-DD-SAA models with different sample sizes $n$ under different regression models.}
    \label{fig:pricing-decision-dependency}
\end{figure}
\paragraph{Comparison of Different Regression Models}
We first compare the impact of using different regression models in Figure\ \ref{fig:pricing-decision-dependency}(a). Because the neural network regression results in many unbounded optimization problems due to poor prediction quality, as reported in Table\ \ref{tab:benders comparison}, its out-of-sample cost is set to 0 for these unbounded cases. This results in high out-of-sample costs. Therefore, we do not include it in the out-of-sample cost comparison.
Figure\ \ref{fig:pricing-decision-dependency}(a) shows that for both ER-DD-SAA and ER-$\rm{D^3RO}$, the uniform kernel regression (ER-DD-SAA-K and ER-${\rm D^3}$RO-K) consistently achieves lower out-of-sample cost compared to the OLS regression (ER-DD-SAA-O and ER-${\rm D^3}$RO-O) across all sample sizes, which illustrates the value of nonparametric regression. 

\paragraph{Value of Decision Dependency}
We now compare the ER-${\rm D^3}$RO with its decision-independent counterpart ER-DRO in Figure \ref{fig:pricing-decision-dependency}. 
When the demand $Y_j$ is independent of decision $z_1$, Problem \eqref{eq:cvar two stage} becomes unbounded and drives $z_1$ to infinity at optimality. As a result, the out-of-sample scenarios of the demand generated by the ground truth model \eqref{eq:ground-truth} become all zero and thus lead to a cost of 0 (as reported in ER-DRO-O/K). On the other hand, our ER-${\rm D^3}$RO model returns much smaller out-of-sample costs. This result highlights the benefit of accounting for decision dependence.

\paragraph{Value of Distributional Robustness}
We now compare the ER-DD-SAA model with the ER-${\rm D^3}$RO model in Figure\ \ref{fig:pricing-decision-dependency}. Figures \ref{fig:pricing-decision-dependency}(b) and \ref{fig:pricing-decision-dependency}(c) are close-ups of Figure \ref{fig:pricing-decision-dependency}(a) under OLS and kernel regression, respectively. We observe that the ER-$\rm{D^3RO}$ models with decision-dependent ambiguity sets achieve lower out-of-sample costs under both kernel regression and OLS compared to ER-DD-SAA. This shows that the ER-${\rm D^3}$RO model is less sensitive to errors in the empirical distribution and is more robust compared to the ER-DD-SAA model.

\section{Conclusion and Future Work}\label{sec: future work}
In this paper, we considered a contextual stochastic program where the uncertainty could be affected by both covariate information and our decisions. We introduced an empirical residuals framework, where the uncertainty on the prediction is considered in a distributionally robust manner with the Wasserstein distance-based ambiguity set. Within this framework, we established several statistical guarantees for the proposed ER-$\rm{D^3RO}$ model, including asymptotic optimality, rate of convergence, finite-sample certificate guarantees, and finite-sample solution guarantees. To solve the resulting ER-$\rm{D^3RO}$ model more efficiently, we further developed a specialized Bender's decomposition algorithm with nonconvex cuts, which can converge to the optimal solution in finitely many iterations under mild conditions. We tested our model and algorithm on a shipment planning and pricing problem with linear regression via OLS, kernel regression, and neural network regression. The numerical results indicated that the proposed Bender's algorithm can improve computational efficiency over Gurobi in most of the sample sizes and regression models. In addition, the ER-$\rm{D^3RO}$ model outperforms both the decision-dependent ER-SAA model and its decision-independent counterpart, which shows the benefit of incorporating distributional ambiguity and the importance of accounting for decision dependence.
One possible future direction is to design data augmentation and cross-validation algorithms that can achieve good empirical and theoretical performance in a limited data regime. Investigating appropriate forms of decision-dependent radii backed with theoretical guarantees also merits further research. 

\vspace*{-0.05in}
\bibliographystyle{siamplain}
\bibliography{references}

@article{chen2007risk,
  title={Risk aversion in inventory management},
  author={Chen, Xin and Sim, Melvyn and Simchi-Levi, David and Sun, Peng},
  journal={Operations Research},
  volume={55},
  number={5},
  pages={828--842},
  year={2007},
  publisher={INFORMS}
}

@article{chen2004coordinating,
  title={Coordinating inventory control and pricing strategies with random demand and fixed ordering cost: {The} finite horizon case},
  author={Chen, Xin and Simchi-Levi, David},
  journal={Operations Research},
  volume={52},
  number={6},
  pages={887--896},
  year={2004},
  publisher={INFORMS}
}

@article{petruzzi1999pricing,
  title={Pricing and the newsvendor problem: {A} review with extensions},
  author={Petruzzi, Nicholas C and Dada, Maqbool},
  journal={Operations Research},
  volume={47},
  number={2},
  pages={183--194},
  year={1999},
  publisher={INFORMS}
}

@book{kallenberg1997foundations,
  title={Foundations of Modern Probability},
  author={Kallenberg, Olav},
  volume={2},
  year={1997},
  publisher={Springer}
}

@article{rockafellar2000optimization,
  title={Optimization of conditional value-at-risk},
  author={Rockafellar, R Tyrrell and Uryasev, Stanislav and others},
  journal={Journal of risk},
  volume={2},
  pages={21--42},
  year={2000},
  publisher={Citeseer}
}

@article{sadana2024survey,
  title={A survey of contextual optimization methods for decision-making under uncertainty},
  author={Sadana, Utsav and Chenreddy, Abhilash and Delage, Erick and Forel, Alexandre and Frejinger, Emma and Vidal, Thibaut},
  journal={European Journal of Operational Research},
  volume={320},
  issue={2},
  pages={271-289},
  year={2025}
}

@incollection{qi2022integrating,
  title={Integrating prediction/estimation and optimization with applications in operations management},
  author={Qi, Meng and Shen, Zuo-Jun},
  booktitle={Tutorials in Operations Research: Emerging and Impactful Topics in Operations},
  pages={36--58},
  year={2022},
  publisher={INFORMS}
}

@article{kannan2020residuals,
  title={Residuals-based distributionally robust optimization with covariate information},
  author={Kannan, Rohit and Bayraksan, G{\"u}zin and Luedtke, James R},
  journal={Mathematical Programming},
  volume={207},
  pages={369–425},
  year={2024},
  publisher={Springer}
}

@article{kannan2025data,
  title={Data-driven sample average approximation with covariate information},
  author={Kannan, Rohit and Bayraksan, G{\"u}zin and Luedtke, James R},
  journal={Operations Research},
  volume={73},
  number={6},
  pages={3245--3259},
  year={2025},
  publisher={INFORMS}
}

@article{esfahani2015data,
  title={Data-driven distributionally robust optimization using the {W}asserstein metric: {P}erformance guarantees and tractable reformulations},
  author={Esfahani, Peyman Mohajerin and Kuhn, Daniel},
  journal={Mathematical Programming},
  volume={171},
  number={1},
  pages={115--166},
  year={2018},
  publisher={Springer}
}

@article{fournier2015rate,
  title={On the rate of convergence in {W}asserstein distance of the empirical measure},
  author={Fournier, Nicolas and Guillin, Arnaud},
  journal={Probability theory and related fields},
  volume={162},
  number={3-4},
  pages={707--738},
  year={2015},
  publisher={Springer}
}

@article{basciftci2021distributionally,
  title={Distributionally robust facility location problem under decision-dependent stochastic demand},
  author={Basciftci, Beste and Ahmed, Shabbir and Shen, Siqian},
  journal={European Journal of Operational Research},
  volume={292},
  number={2},
  pages={548--561},
  year={2021},
  publisher={Elsevier}
}

@article{bertsimas2020predictive,
  title={From predictive to prescriptive analytics},
  author={Bertsimas, Dimitris and Kallus, Nathan},
  journal={Management Science},
  volume={66},
  number={3},
  pages={1025--1044},
  year={2020},
  publisher={INFORMS}
}

@article{liu2022coupled,
  title={Coupled learning enabled stochastic programming with endogenous uncertainty},
  author={Liu, Junyi and Li, Guangyu and Sen, Suvrajeet},
  journal={Mathematics of Operations Research},
  volume={47},
  number={2},
  pages={1681--1705},
  year={2022},
  publisher={INFORMS}
}

@article{elmachtoub2022smart,
  title={Smart “predict, then optimize”},
  author={Elmachtoub, Adam N and Grigas, Paul},
  journal={Management Science},
  volume={68},
  number={1},
  pages={9--26},
  year={2022},
  publisher={INFORMS}
}

@incollection{kuhn2019wasserstein,
  title={Wasserstein distributionally robust optimization: Theory and applications in machine learning},
  author={Kuhn, Daniel and Esfahani, Peyman Mohajerin and Nguyen, Viet Anh and Shafieezadeh-Abadeh, Soroosh},
  booktitle={Tutorials in Operations Research: Operations {R}esearch \& {M}anagement {S}cience in {T}he {A}ge of {A}nalytics},
  pages={130--166},
  year={2019},
  publisher={INFORMS}
}

@article{gao2023finite,
  title={Finite-sample guarantees for {W}asserstein distributionally robust optimization: Breaking the curse of dimensionality},
  author={Gao, Rui},
  journal={Operations Research},
  volume={71},
  number={6},
  pages={2291--2306},
  year={2023},
  publisher={INFORMS}
}

@article{ban2019big,
  title={The big data newsvendor: Practical insights from machine learning},
  author={Ban, Gah-Yi and Rudin, Cynthia},
  journal={Operations Research},
  volume={67},
  number={1},
  pages={90--108},
  year={2019},
  publisher={INFORMS}
}

@article{zhang2024optimal,
  title={Optimal robust policy for feature-based newsvendor},
  author={Zhang, Luhao and Yang, Jincheng and Gao, Rui},
  journal={Management Science},
  volume={70},
  number={4},
  pages={2315--2329},
  year={2024},
  publisher={INFORMS}
}

@article{lee2012newsvendor,
  title={Newsvendor-type models with decision-dependent uncertainty},
  author={Lee, Soonhui and Homem-de-Mello, Tito and Kleywegt, Anton J},
  journal={Mathematical Methods of Operations Research},
  volume={76},
  pages={189--221},
  year={2012},
  publisher={Springer}
}

@article{luo2020distributionally,
  title={Distributionally robust optimization with decision dependent ambiguity sets},
  author={Luo, Fengqiao and Mehrotra, Sanjay},
  journal={Optimization Letters},
  volume={14},
  number={8},
  pages={2565--2594},
  year={2020},
  publisher={Springer}
}

@article{noyan2018distributionally,
    author = {Noyan, Nilay and Rudolf, G\'{a}bor and Lejeune, Miguel},
    title = {Distributionally Robust Optimization Under a Decision-Dependent Ambiguity Set with Applications to Machine Scheduling and Humanitarian Logistics},
    journal = {INFORMS Journal on Computing},
    volume = {34},
    number = {2},
    pages = {729-751},
    year = {2022}
}

@article{webster2012approximate,
  title={An approximate dynamic programming framework for modeling global climate policy under decision-dependent uncertainty},
  author={Webster, Mort and Santen, Nidhi and Parpas, Panos},
  journal={Computational Management Science},
  volume={9},
  pages={339--362},
  year={2012},
  publisher={Springer}
}

@article{nohadani2018optimization,
  title={Optimization under decision-dependent uncertainty},
  author={Nohadani, Omid and Sharma, Kartikey},
  journal={SIAM Journal on Optimization},
  volume={28},
  number={2},
  pages={1773--1795},
  year={2018},
  publisher={SIAM}
}

@article{poss2013robust,
  title={Robust combinatorial optimization with variable budgeted uncertainty},
  author={Poss, Michael},
  journal={4OR},
  volume={11},
  pages={75--92},
  year={2013},
  publisher={Springer}
}

@article{goel2006class,
  title={A class of stochastic programs with decision dependent uncertainty},
  author={Goel, Vikas and Grossmann, Ignacio E},
  journal={Mathematical {P}rogramming},
  volume={108},
  number={2},
  pages={355--394},
  year={2006},
  publisher={Springer}
}

@article{sen2018learning,
  title={Predictive Stochastic Programming},
  author={Sen, Suvrajeet and Deng, Yunxiao},
  journal={Computational Management Science},
  year={2022},
  volume={19},
  pages={1--45}
}

@article{ban2019dynamic,
  title={Dynamic procurement of new products with covariate information: The residual tree method},
  author={Ban, Gah-Yi and Gallien, J{\'e}r{\'e}mie and Mersereau, Adam J},
  journal={Manufacturing \& Service Operations Management},
  volume={21},
  number={4},
  pages={798--815},
  year={2019},
  publisher={INFORMS}
}

@article{gao2023distributionally,
  title={Distributionally robust stochastic optimization with {W}asserstein distance},
  author={Gao, Rui and Kleywegt, Anton},
  journal={Mathematics of Operations Research},
  volume={48},
  number={2},
  pages={603--655},
  year={2023},
  publisher={INFORMS}
}

@article{el2019generalization,
  title={Generalization bounds in the predict-then-optimize framework},
  author={El Balghiti, Othman and Elmachtoub, Adam N and Grigas, Paul and Tewari, Ambuj},
  journal={Advances in {N}eural {I}nformation {P}rocessing {S}ystems},
  volume={32},
  year={2019}
}

@article{kallus2023stochastic,
  title={Stochastic optimization forests},
  author={Kallus, Nathan and Mao, Xiaojie},
  journal={Management Science},
  volume={69},
  number={4},
  pages={1975--1994},
  year={2023},
  publisher={INFORMS}
}

@article{bertsimas2022data,
  title={Data-driven optimization: A reproducing kernel {H}ilbert space approach},
  author={Bertsimas, Dimitris and Koduri, Nihal},
  journal={Operations Research},
  volume={70},
  number={1},
  pages={454--471},
  year={2022},
  publisher={INFORMS}
}

@article{oroojlooyjadid2020applying,
  title={Applying deep learning to the newsvendor problem},
  author={Oroojlooyjadid, Afshin and Snyder, Lawrence V and Tak{\'a}{\v{c}}, Martin},
  journal={IISE Transactions},
  volume={52},
  number={4},
  pages={444--463},
  year={2020},
  publisher={Taylor \& Francis}
}

@article{qi2020learning,
  title={Learning newsvendor problem with intertemporal dependence and moderate non-stationarities},
  author={Qi, Meng and Shen, Zuo-Jun Max and Zheng, Zeyu},
  journal = {Production and Operations Management},
  year={2024}
}

@article{bertsimas2019dynamic,
  title={Dynamic optimization with side information},
  author={Bertsimas, Dimitris and McCord, Christopher and Sturt, Bradley},
  journal={European Journal of Operational Research},
  volume={304},
  number={2},
  pages={634--651},
  year={2023},
  publisher={Elsevier}
}

@article{bertsimas2017bootstrap,
  title={Bootstrap robust prescriptive analytics},
  author={Bertsimas, Dimitris and Van Parys, Bart},
  journal={Mathematical Programming},
  volume={195},
  number={1},
  pages={39--78},
  year={2022},
  publisher={Springer}
}

@article{estes2023smart,
  title={Smart predict-then-optimize for two-stage linear programs with side information},
  author={Estes, Alexander S and Richard, Jean-Philippe P},
  journal={INFORMS Journal on Optimization},
  volume={5},
  number={3},
  pages={295--320},
  year={2023},
  publisher={INFORMS}
}

@article{yu2022multistage,
  title={Multistage distributionally robust mixed-integer programming with decision-dependent moment-based ambiguity sets},
  author={Yu, Xian and Shen, Siqian},
  journal={Mathematical Programming},
  volume={196},
  number={1},
  pages={1025--1064},
  year={2022},
  publisher={Springer}
}

@article{hanasusanto2018conic,
  title={Conic programming reformulations of two-stage distributionally robust linear programs over {W}asserstein balls},
  author={Hanasusanto, Grani A and Kuhn, Daniel},
  journal={Operations Research},
  volume={66},
  number={3},
  pages={849--869},
  year={2018},
  publisher={INFORMS}
}

@article{rahimian2019distributionally,
  title={Frameworks and results in distributionally robust optimization},
  author={Rahimian, Hamed and Mehrotra, Sanjay},
  journal={Open Journal of Mathematical Optimization},
  volume={3},
  pages={1--85},
  year={2022}
}

@article{bunea2007sparsity,
  author  = {Florentina Bunea and Alexandre B. Tsybakov and Marten H. Wegkamp},
  title   = {Sparsity oracle inequalities for the Lasso},
  journal = {Electronic Journal of Statistics},
  volume  = {1},
  pages   = {169--194},
  year    = {2007},
  doi     = {10.1214/07-EJS008}}

@inproceedings{hsu2012random,
  title={Random design analysis of ridge regression},
  author={Hsu, Daniel and Kakade, Sham M and Zhang, Tong},
  booktitle={Conference on {L}earning {T}heory},
  pages={9--1},
  year={2012},
  organization={JMLR Workshop and Conference Proceedings}
}

@article{hellemoetal18,
   author={Hellemo, Lars and Barton, Paul I. and Tomasgard, Asgeir},
   title={Decision-dependent probabilities in stochastic programs with recourse}, 
   journal={Computational Management Science},
   volume={15},
   number={3},
   pages={1619-6988},
   year={2018}
}

@article{vayanos2026robust,
  title={Robust optimization with decision-dependent information discovery},
  author={Vayanos, Phebe and Georghiou, Angelos and Yu, Han},
  journal={Management Science},
  volume={72},
  number={2},
  pages={1509--1528},
  year={2026},
  publisher={INFORMS}
}

@inproceedings{vayanos2011decision,
  title={Decision rules for information discovery in multi-stage stochastic programming},
  author={Vayanos, Phebe and Kuhn, Daniel and Rustem, Ber{\c{c}}},
  booktitle={2011 50th IEEE Conference on Decision and Control and European Control Conference},
  pages={7368--7373},
  year={2011},
  organization={IEEE}
}

@inproceedings{hanasusanto2013robust,
  title={Robust data-driven dynamic programming},
  author={Hanasusanto, Grani Adiwena and Kuhn, Daniel},
  booktitle={Advances in Neural Information Processing Systems},
  pages={827--835},
  year={2013}
}

@article{dou2019distributionally,
  title={Distributionally robust optimization with correlated data from vector autoregressive processes},
  author={Dou, Xialiang and Anitescu, Mihai},
  journal={{Operations Research Letters}},
  volume={47},
  number={4},
  pages={294--299},
  year={2019},
  publisher={Elsevier}
}

@article{Liu2023solving,
  title={Solving Data-Driven Newsvendor Pricing Problems with Decision-Dependent Effect},
  author={Liu, Wenxuan and Zhang, Zhihai},
  journal={arXiv preprint arXiv:2304.13924},
  year={2023}
}

@book{villani2009optimal,
  title={Optimal Transport: Old and New},
  author={Villani, C{\'e}dric and others},
  volume={338},
  year={2009},
  publisher={Springer}
}

@article{rigollet2023high,
  title={High-dimensional statistics},
  author={Rigollet, Philippe and H{\"u}tter, Jan-Christian},
  journal={arXiv preprint arXiv:2310.19244},
  year={2023}
}

@article{bertsimas2019predictions,
  title={From predictions to prescriptions in multistage optimization problems},
  author={Bertsimas, Dimitris and McCord, Christopher},
  journal={arXiv preprint arXiv:1904.11637},
  year={2019}
}

@article{anderson2020strong,
  title={Strong mixed-integer programming formulations for trained neural networks},
  author={Anderson, Ross and Huchette, Joey and Ma, Will and Tjandraatmadja, Christian and Vielma, Juan Pablo},
  journal={Mathematical Programming},
  volume={183},
  number={1},
  pages={3--39},
  year={2020},
  publisher={Springer}
}

@misc{GurobiML2026,
	author = {{Gurobi Optimization, LLC}},
	howpublished = {\url{https://gurobi-machinelearning.readthedocs.io/en/stable/user/mip-models.html}},
	title = {Gurobi Machine Learning Manual},
	year = {2026}}

@article{kuhn2025distributionally,
  title={Distributionally robust optimization},
  author={Kuhn, Daniel and Shafiee, Soroosh and Wiesemann, Wolfram},
  journal={Acta Numerica},
  volume={34},
  pages={579--804},
  year={2025},
  publisher={Cambridge University Press}
}

\end{document}